\title{Existence of attracting invariant $2$-curves in fibred quadratic dynamics}
\author{Igsyl Dom\'inguez
        }
\theoremstyle{plain}
\newtheorem{lemma}{Lemma}
\newtheorem{proposition}{Proposition}
\newtheorem{corollary}{Corollary}
\newtheorem{definition}{Definition}
\newtheorem{conjecture}{Conjecture}
\newtheorem{remark}{Remark}
\newtheorem{example}{Example}
\renewenvironment{proof}{{\bf \emph{Proof.} }}{\hfill $\Box$ \\} 
\theoremstyle{plain}
\newtheorem*{theorem*}{Theorem}
\newtheorem*{corollary*}{Corollary}
\newcommand{\RR}{\mathbb{R}}
\newcommand{\QQ}{\mathbb{Q}}
\newcommand{\CC}{\mathbb{C}}
\newcommand{\DD}{\mathbb{D}}
\newcommand{\NN}{\mathbb{N}}
\newcommand{\TT}{\mathbb{T}^1}
\newcommand{\Fat}{\mathcal{F}}
\newcommand{\cC}{\mathcal{C}}
\newcommand{\rota}{\mathcal{R}_\alpha}
\begin{document}
\newpage
\maketitle
\begin{abstract}
    We present a construction of new invariant sets for fibred polynomial dynamics with base an irrational rotation over the unit circle, called \emph{multi-curves}. Furthermore, the local dynamical theory for attracting invariant curves is extended to these objects. 
\end{abstract}



\section{Introduction}
In a given dynamical system, it is possible to find several types of invariant objects, such as fixed and periodic points, a minimal complex attractor, and the support of an invariant measure, among others. Through these objects, we can understand many features of the system. 

In classic one-dimensional complex dynamics, the Julia set concentrates the most significant (chaotic) part of the dynamics. Nevertheless, we can focus on a simpler invariant set, namely the repelling periodic orbits, since it is a classical result from Fatou and Julia that those objects are dense in the Julia set. 

A continuous map $P:\TT\times\CC\to\TT\times\CC$ is called a \emph{fibred polynomial dynamics} with base an $\alpha$-irrational rotation if $P(\theta,z)=(\rota(\theta),p_\theta(z))$, where $\rota$ is an irrational rotation of $\TT$, and $p_\theta$ is a polynomial for each $\theta$, whose coefficients depend continuously on $\theta$. 

Given that the  irrational rotations of the circle are minimal, it follows that  these fibred polynomials do not contain either fixed or periodic points. It raises the natural question of the existence of minimal invariant objects, distinct from the Julia set for fibred polynomial dynamics. Given the nature of the base space of fibred polynomial dynamics, it is logical to expect that a minimal invariant object possesses the same topological structure of the base space, namely a closed curve.

In his doctoral thesis \cite{PonceTesis}, M. Ponce proved that a natural extension for fixed and periodic points is  \emph{invariant curves}, see Definition \ref{def_inv} below. 
\begin{remark}
    It is important to note that, unlike the classic complex case, the existence of invariant curves for fibred dynamics is a \textit{cohomological problem} rather than an algebraic problem. 
\end{remark}

The example presented in \cite{Mario2} has demonstrated that despite the similarities, there exist significant differences between fibred and classic polynomial dynamics. The aim of this work is to demonstrate another difference in this context by proving the existence of attracting invariant objects that are not simple invariant curves, but have the topological structure of a curve. This new object will be called a \emph{multi-curve}.

This work is organized as follows: In Section 2 we recover the definition of the invariant curve and its main linearization results. In Section 3, we define our objects of study and extend the results of the previous section. In Section 4 we construct a mechanism to obtain 2-invariant attractor curves for quadratic fibered polynomials. Finally, in Section 5, we use the techniques from the previous section to obtain a 3-curve invariant for a bundled rational dynamics.

\section{Invariant curves}\label{sec_inv_curves}
In this section, we describe the concept and basic properties of the primer invariant objects that can be found in \emph{fibred polynomial dynamics} (fpd) with base an irrational rotation. We refer to \cite{DoPo23, Ponce07, Sester} for fundamentals on fibred dynamics.

\begin{definition}\label{def_inv}
    Let $P:\TT\times\CC\to\TT\times\CC$ be a fpd with base an irrational rotation $\rota$. We say that a simple closed continuous curve $\gamma:\TT\to\CC$ is an \textbf{invariant curve} for $P$ if $\gamma$ holds the following condition
    \begin{equation}\label{cond_curve_inv}
        p_\theta(\gamma(\theta))=\gamma(\theta+\alpha), 
    \end{equation}
    for every $\theta\in\TT$. 
\end{definition}
Similar to the classic multiplier of complex fixed points, there is a fibred version of this number that locally characterize the dynamics of an invariant curve. 
\begin{definition}
Let $u: \TT \rightarrow \CC$ be an invariant curve for a fibred polynomial $P(\theta,z)=(\rota(\theta),p_\theta(z))$ over an irrational rotation. Provided that the function $\theta \to \log|p_{\theta}'(u(\theta))|$ is a $L^1(\TT)$ function, we call the \textbf{multiplier of the curve} to the positive number 

$$\kappa(u)= \exp\Big(\int_{\TT}\log|p_{\theta}'(u(\theta))|d\theta\Big).$$

When the multiplier $\kappa(u)<1$ we say that the curve is \textbf{attracting}. If $\kappa(u)>1$ we call it \textbf{repulsor}, and in the case $\kappa(u)=1$ the curve is called \textbf{indifferent}. 
\end{definition}

When the invariant curve is also a critical curve, i.e. $p_\theta'(u(\theta))=0$ for every $\theta\in\TT$, it is possible to extend the definition by making $\kappa (u)=0$, note that this is the case for the constant curve $z\equiv \infty$. We call such an invariant curve \textbf{super-attracting}. The integrability condition of the invariant curve allows non-empty intersections between the invariant curves and the critical set, only on finite sets.

In holomorphic dynamics, the multiplier provides us with information about the local dynamics around the respective cycle. For instance, in the attracting case,  we can find a neighborhood of the cycle that is ``\emph{attracted}'' to it. This local dynamical theory has been extended to the fibred case for invariant curves, see \cite{DoPo23,Ponce07} for further references. In particular, in \cite{Ponce07} the author considers local linearization under the extra condition that the fibred polynomial to be injective on the invariant curve, whereas a recent result in \cite{DoPo23} performs an analogous linearization when $\log|p_\theta'(u(\theta))|$ is just a $L^1(\TT)$ function, allowing (finitely many) critical points on the invariant curve. Moreover, the concept of \emph{basin of attraction} for an attracting invariant curve is defined and proved to be an open subset of the fibred space. In the next section, we extend these results for multi-curves.

\section{Multi-curves}\label{multi_curves}
The aim of this section is to describe \textit{multi-curves} as dynamical objects, and to extend the local theory of invariant curves to them. 

Let $\tilde{\gamma}:\TT\to\CC$ be a simple closed curve. For each $n\in\NN$, $\tilde{\gamma}$ induce a simple closed curve in the fibred space $\TT\times\CC$ given by 
\begin{displaymath}
    \begin{array}{rccc}
       \gamma:  & \TT & \to & \TT\times\CC \\
                & \theta & \mapsto & ( \langle n \theta\rangle, \Tilde{\gamma}(\theta)),
    \end{array}
\end{displaymath}
where $\langle\cdot\rangle$ denotes the fractional part. In other words, the image $\Gamma=\gamma(\TT)$ is a closed curve in $\TT\times\CC$ without self-intersections, turning $n$-times in the direction of the base space $\TT$. 

\begin{definition}\label{def_multi_curve}
    We say that a subset $\Gamma\subset\TT\times\CC$ is  a \textbf{$n$-curve} if it is the image of a curve $\gamma$ induced by some $\tilde\gamma:\TT\to\CC$ as described above.  In general, a subset $\Gamma\subset \TT\times\CC$ is called a \textbf{$(p,n)$-curve} or \textbf{multi-curve} if $\Gamma$ consists of $p$ components, each of which is a $n$-curve.
\end{definition}
If we extend $\tilde\gamma$ to $\TT\times\CC$, we may think of $\tilde\gamma$ as a \emph{lifting} of the $n$-curve $\gamma$ under the $n$-fold covering $\Pi_n:\TT\times\CC\to\TT\times\CC$ given by $(\theta,z)\mapsto(\langle n\theta\rangle,z)$. 

Note that if $\Gamma\subset\TT\times\CC$ is a $n$-curve, then the fiber over $\theta$ contains exactly $n$-points for each $\theta\in\TT$, i.e. $|\Gamma_\theta|=n\ \forall\theta$. Hence, if we set a base point $(0,g_0)\in\Gamma$, then there exists a unique $\tilde\gamma$ with $\tilde\gamma(0)=g_0$, in other words, there are $n$ lifts $\tilde\gamma$ for every $n$-curve.

Also, it is not difficult to notice that a $n$-curve $\gamma$  consists of a concatenated list of curves $\gamma_1,...,\gamma_n:[0,1]\to\TT\times\CC$ satisfying $\gamma_i(1)=\gamma_{i+1}(0)$, with $\gamma_{n+1}=\gamma_1$. Once we set a base point $(\theta_0,g_0)=(0,\tilde\gamma(0))$ in $\Gamma$, each curve may be defined as
\begin{equation}\label{eq_unf1}
    \gamma_i(\theta) = {\gamma}\left(\dfrac{i-1 + \theta}{n}\right), \ i=1,2,...,n.
\end{equation}

In this sense, we denote a $n$-curve $\Gamma$ with base point $(0,\tilde\gamma(0))$ as $\Gamma=(\gamma_1\ \gamma_2\ ...\ \gamma_n)$.

\begin{figure}[ht]
\includegraphics[width=8cm]{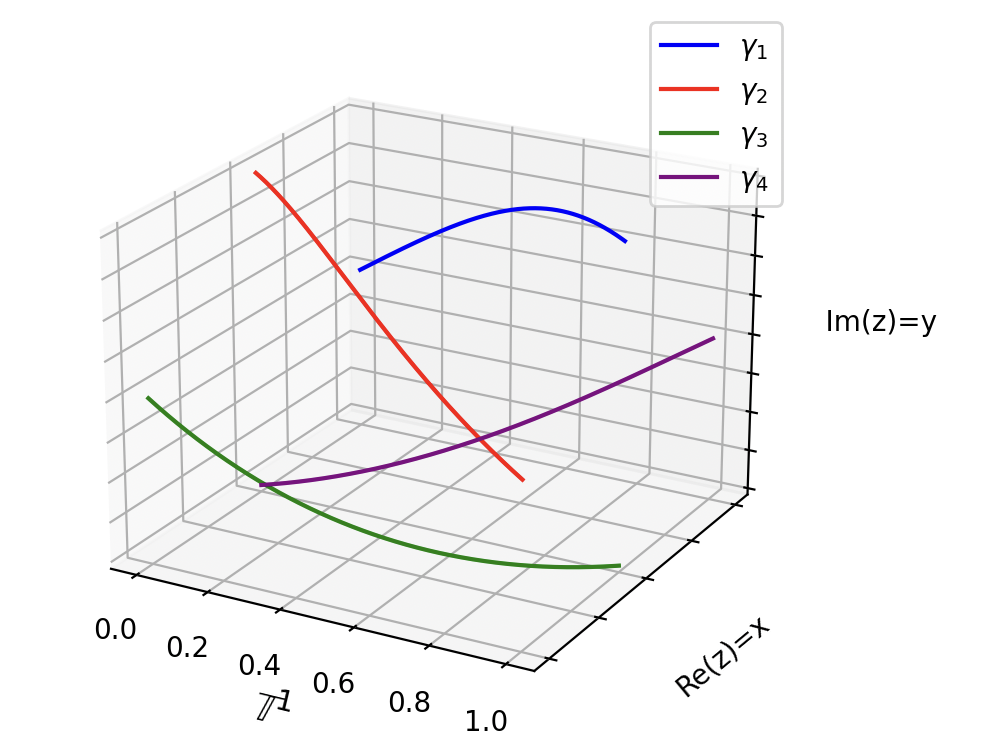}
\centering
\caption{Displaying a $4$-curve in $\TT\times\CC$}
\end{figure}

\begin{figure}[ht]
\includegraphics[width=8cm]{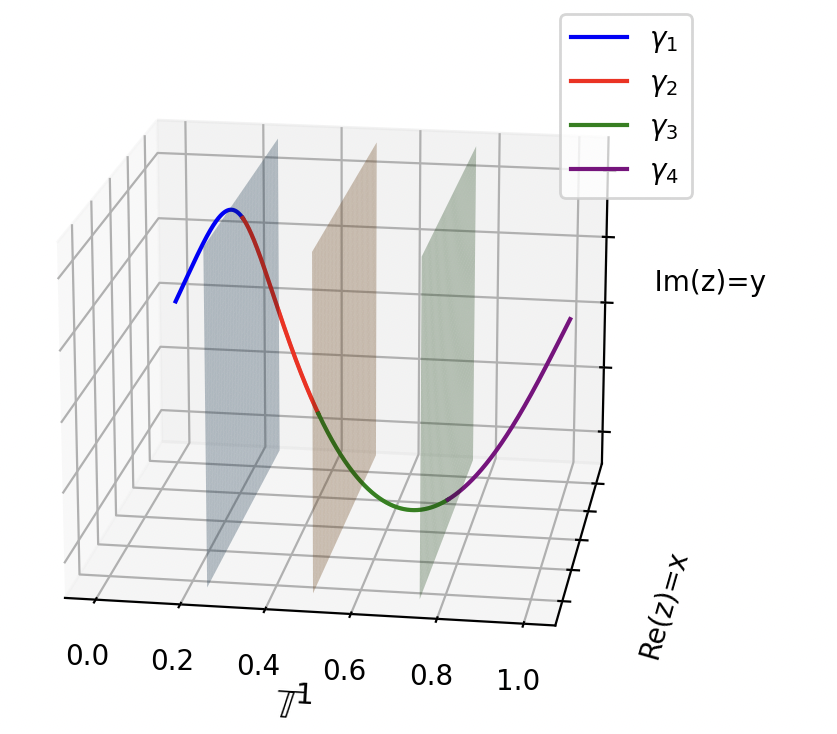}
\centering
\caption{``Unfolding'' process for the $4$-curve}
\end{figure}

\begin{definition}\label{def_unf}
    Let $\Gamma=(\gamma_1\ \gamma_1\ ...\ \gamma_{n})$ be a $n$-curve induced by an injective continuous function $\Tilde{\gamma}:\TT\to\CC$, the extended image of $\tilde\gamma$ in the fibred space $\TT\times\CC$, as given by 
    \begin{equation}\label{eq_unf2}
    \tilde{\gamma}(\theta) = (\theta,\tilde\gamma(\theta)),
    \end{equation}
     is called the \textbf{unfolding curve} of $\gamma$.  
\end{definition}

\begin{remark}
    Note that the unfolding $\tilde\gamma$ as above defined is actually the \emph{unique} lift of the $n$-curve $\gamma:\TT\to\TT\times\CC$ with base point $(0,\tilde\gamma(0))$. 
\end{remark}
\begin{figure}[ht]
\includegraphics[width=10cm]{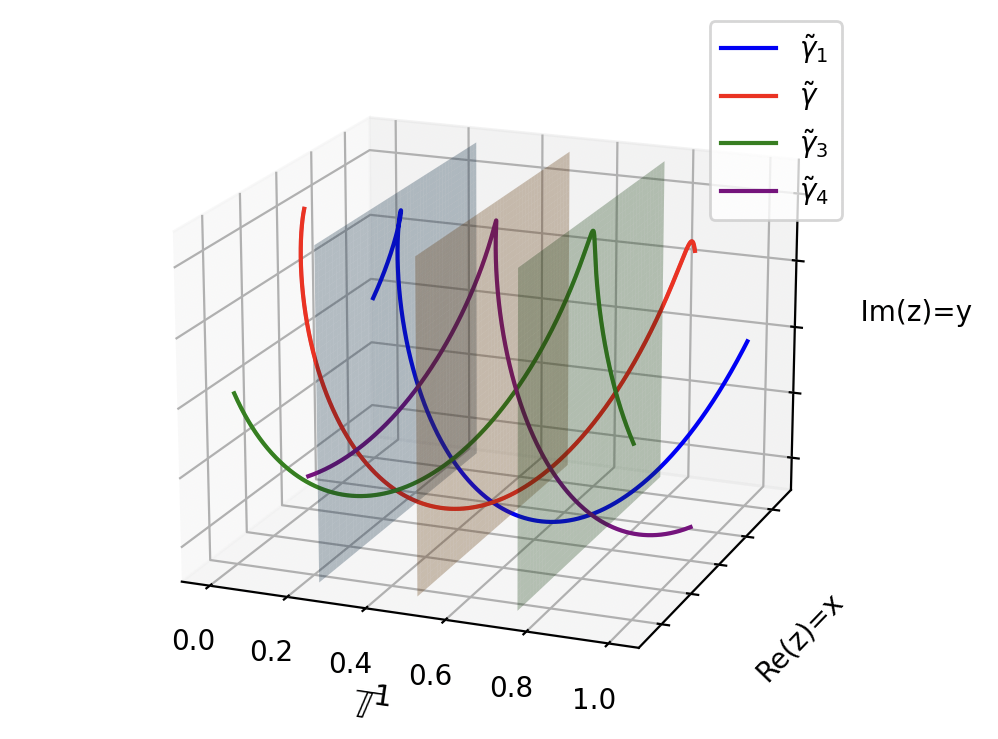}
\centering
\caption{4 \emph{unfoldings} for a $4$-curve}
\end{figure}

\subsection{Invariant multi-curves} 

Consider now a fibred holomorphic dynamics $F:\TT\times\CC\to\TT\times\CC$ and let $\Gamma = (\gamma_1\ \gamma_2\ ...\ \gamma_{n})$ be a $n$-curve in $\TT\times\CC$ such that $F\big|_{\Gamma}$ is a homeomorphism, i.e. $\Gamma= (\gamma_1\ \gamma_2\ ...\ \gamma_{n})$ is \emph{invariant} under $F$ as a subset. For each $\theta\in\TT$, the fiber of $\Gamma$ over $\theta$ consists of $n$ distinct points, and then Equation (\ref{def_inv}) makes no sense as an invariant notion. 
Although $\Gamma=(\gamma_1\ \gamma_2\ ...\ \gamma_{n})$ is invariant under $F$ as a subset of $\TT\times\CC$, the orbit of $\Gamma$ may be `dynamically jumping' along the concatenated list of curves $(\gamma_1\ \gamma_2\ ...\ \gamma_{n})$.

Consider the $n$-fold covering of $\TT\times\CC$ given by 
\begin{displaymath}
    \begin{array}{rccc}
        \Pi_n: & \TT\times\CC & \to & \TT\times\CC \\
             & (\theta,z) & \mapsto & (\langle n\theta\rangle,z),
    \end{array}
\end{displaymath}
then, every \emph{lifting} of $F$ under $\Pi_n$ is given by 

\begin{displaymath}
    \begin{array}{rccc}
       \tilde{F}_\tau:  & \TT\times\CC & \to & \TT\times\CC \\
                 & (\theta, z) & \mapsto & \left(\theta + \dfrac{\alpha+\tau}{n}, f_{\langle n\theta\rangle}(z)\right),
    \end{array}
\end{displaymath}
for some $\tau\in\{0,1,...,n-1\}$. We recall that $T(\theta)=\theta+\tau/n$ is a \textbf{deck transformation} for $\Pi_n$. 
We obtain the following commutative diagram.
\begin{equation}\label{comm_diag}
\xymatrix{
  \TT\times\CC \ar[d]_{\Pi_n} \ar[r]^{\tilde{F}}  &
             \TT\times\CC \ar[d]^{\Pi_n} \\
  \TT\times\CC  \ar[r]_{F}  & \TT\times\CC . }
\end{equation}

\begin{definition}\label{def_mult-curve}
    Suppose $\Gamma=(\gamma_1\ \gamma_2\ ...\ \gamma_{n})$ is a $n$-curve (or multi-curve) induced by a curve $\tilde\gamma:\TT\to\CC$. We say that $\Gamma$ is a \textbf{dynamically invariant curve} (or \textbf{invariant multi-curve}) for the fibred dynamics $F:\TT\times\CC\to\TT\times\CC$ if the curve $\tilde\gamma:\TT\to\CC$ is invariant for some lifting $\tilde{F}_\tau:\TT\times\CC\to\TT\times\CC$ as defined in (\ref{def_inv}). 
\end{definition}
The above commutative diagram makes this definition well-defined. More over, it is clear that $\Hat{F}_\tau\big|_{\Hat{\Gamma}}$ is a homeomorphism if and only if $F\big|_{\Gamma}$ is a homeomorphism. Finally, combining the invariance of $\tilde\gamma$ and the commutative diagram, it follows that $\tau$ determine how $F\big|_{\Gamma}$ dynamically jumps among the curves $(\gamma_0\ \gamma_1\ ...\ \gamma_{n-1})$, that is, for every $\theta\in\TT$ we have
\begin{equation}\label{eq_jump}
    f_\theta(\gamma_i(\theta)) = \gamma_{i+\tau}(\theta+\alpha).
\end{equation}
This way, $\tau$ is called the \textbf{jumping integer} for $\Gamma = (\gamma_1\ \gamma_2\ ...\ \gamma_{n})$. 

\subsection{Dynamically invariant multi-curves exist.}
In this short subsection, we exhibit a couple of examples of multi-curves for fibred dynamics. The examples are extreme opposite in the sense that the former is a trivial construction of several multi-curves for a fibred dynamics in the unit circle (with rotation as base map), while the further is a forced construction of a fibred polynomial dynamics based on a given \emph{topological multi-curve}. 
\begin{example}\label{ex_triv} \emph{
    Let $n\in\NN$, $\alpha\in\TT$, and $F:\TT\times\TT\to\TT\times\TT$ be a fibred dynamics on the unit circle over a rotation by $\alpha$ in the unit circle itself, $F(x,y)=(x+\alpha,y+\dfrac{\alpha}{n})$. Then the curve $\gamma:[0,n]\to\TT$ defined by 
    $$
    t\mapsto\gamma(t)=\dfrac{t}{n},
    $$
    induce an invariant $n$-curve for $F$. In fact, the phase space $\TT\times\TT$ is foliated by invariant copies of this invariant $n$-curve. }
\end{example}
\begin{example}\label{ex_lag} \emph{
The Interpolation Lagrange polynomial is a very useful tool to construct \emph{invariant multi-curves}. Let $\Gamma = (\gamma_1\ \gamma_2\ ...\ \gamma_{n})$ be a $n$-curve, $\alpha\in\RR$, and $\tau\in \{0, 1,...,n-1\}$. For every $\theta\in\TT$, let $p_\theta$ be the $n-1$ degree Lagrange interpolation polynomial taking the points $\{\gamma_1(\theta), \gamma_2(\theta),...,\gamma_{n}(\theta)\}$ to the points $\{\gamma_{1+\tau}(\theta+\alpha), \gamma_{2+\tau}(\theta+\alpha),...,\gamma_{n-1+\tau}(\theta+\alpha)\}$ sending point $\gamma_{i}(\theta)$ to the point $\gamma_{i+\tau}(\theta+\alpha)$, where $i+\tau$ is taken $(\mod n)$. Then, the fibred polynomial
\begin{displaymath}
    \begin{array}{rccl}
        P: & \TT\times\CC & \to & \TT\times\CC \\
           & (\theta,z) & \mapsto & (\theta + \alpha, p_\theta(z))
    \end{array}
\end{displaymath}
is continuous and leaves $\Gamma$ (dynamically) invariant with jumping integer equal to $\tau$. A similar construction can be made to get a fibred higher degree polynomial dynamics ($(n+p-1)$ degree) leaving invariant a prescribed $(p,n)$-curve.
}
\end{example} 
\subsection{Dynamical nature of multi-curves} 
One wonders if it is possible to determine a (locally) dynamical nature of an invariant multi-curves as for simple invariant curves. This will be possible since the invariance of the multi-curve is defined through a simple (unfolding) invariant curve. 

The fibred multiplier can then be extended for invariant multi-curves in the following way. 
\begin{definition}
    Suppose that $\Gamma=(\gamma_1\ \gamma_2\ ...\ \gamma_n)$ is a (dynamically) invariant $n$-curve for the fibred polynomial $P(\theta,z)=(\theta+\alpha,p_\theta(\theta))$, then the \textbf{fibred multiplier} of $\Gamma$ is defined as 
    \begin{equation}\label{multi-multiplier}
        \kappa_f(\Gamma):=\kappa(\tilde{\gamma})=\exp\left(\int_{\TT}\log |\partial_z\hat{p}_\theta(\tilde{\gamma}(\theta))|d\theta\right),
    \end{equation}
    where $\hat{P}(\theta,z)=\left(\theta + \dfrac{\tau+\alpha}{n}, \hat{p}_\theta(z)\right)$ and $\tilde{\gamma}$ are the lifted \emph{fpd} and its (unfolding) invariant curve associated. Moreover, the multi-curve $\Gamma$ is called \textbf{attracting}, \textbf{repulsor} or indifferent if $\kappa<1$, $\kappa>1$ or $\kappa=1$ respectively.
\end{definition}
\begin{remark}
     In Example \ref{ex_lag}, by increasing the degree of $p_\theta$, we can impose extra mild conditions on the complex derivative $\partial_z P$ at points of the multi-curve $\Gamma$, so that $\Gamma$ yields into an attracting invariant $n$-curve. 
\end{remark}
Defining the multiplier of a multi-curve through its unfolding curve, allows us to extend the local theory for multi-curves from Section \ref{sec_inv_curves}. The following results are direct consequences of the ones in \cite{DominguezThesis, DoPo23, Ponce07} through the commutative diagram (\ref{comm_diag}). 

\begin{lemma}[The attracting case]
    Let $P$ be a fibred polynomial dynamics over an irrational rotation, and $\Gamma$ be an attracting invariant multi-curve. Then there exists a continuous change of coordinates $H(\theta,z)=(\theta,a(\theta)z+b(\theta))$ such that $\Gamma$ is still an attracting invariant multi-curve for the conjugated fibred polynomial dynamics $Q=H^{-1}\circ P\circ H$. Moreover, if $Q(\theta,z)=(\theta+\alpha, q_\theta(z))$, then there exists $c<1$ such that
    $$
    \sup_{\theta\in\TT}|\partial_z q_\theta(\gamma(\theta))|<c.
    $$
\end{lemma}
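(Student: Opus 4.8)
The plan is to reduce the statement about the multi-curve $\Gamma$ to the corresponding statement about its unfolding curve $\tilde\gamma$ via the commutative diagram (\ref{comm_diag}), and then invoke the known linearization result for attracting \emph{simple} invariant curves from \cite{DoPo23, Ponce07}. Concretely, since $\Gamma$ is a dynamically invariant attracting $n$-curve for $P$, by Definition \ref{def_mult-curve} there is a lift $\hat P(\theta,z)=(\theta+\tfrac{\tau+\alpha}{n},\hat p_\theta(z))$ of $P$ under $\Pi_n$ for which $\tilde\gamma$ is a simple invariant curve, and by (\ref{multi-multiplier}) its multiplier $\kappa(\tilde\gamma)=\kappa_f(\Gamma)<1$, so $\tilde\gamma$ is an attracting invariant curve in the sense of Section \ref{sec_inv_curves}. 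The attracting linearization result for simple curves then provides a fibred affine change of coordinates $\tilde H(\theta,z)=(\theta,a(\theta)z+b(\theta))$ (continuous, with $a(\theta)\neq 0$) such that, in the conjugated dynamics $\tilde Q=\tilde H^{-1}\circ\hat P\circ\tilde H$ with fibre maps $\tilde q_\theta$, the curve $\tilde\gamma$ is still invariant and attracting with $\sup_\theta|\partial_z\tilde q_\theta(\tilde\gamma(\theta))|<c$ for some $c<1$.

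The second step is to push this change of coordinates down through $\Pi_n$. The key observation is that $\tilde H$ depends on $\theta$ only through $\langle n\theta\rangle$ is not automatic — rather, one should check that the change of coordinates produced by the linearization theorem is $\Pi_n$-equivariant, i.e. commutes with the deck transformation $T(\theta)=\theta+\tau/n$, so that it descends to a well-defined continuous $H(\theta,z)=(\theta,a(\theta)z+b(\theta))$ on $\TT\times\CC$ with $\Pi_n\circ\tilde H=H\circ\Pi_n$. One can arrange this either by inspecting the explicit construction of the linearizing coordinates in \cite{DoPo23} (they are built from cohomological equations whose solution inherits the periodicity of the data, and the data here is $T$-periodic because $\hat p_\theta$ depends on $\theta$ only via $\langle n\theta\rangle$), or by a symmetrization argument: average the candidate linearizer over the finite deck group to force equivariance, noting that the linearizing property is preserved under this averaging since it is a property of the restriction to the (deck-invariant) curve. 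Once $H$ is defined, diagram (\ref{comm_diag}) for $\tilde H$ and its pushforward $H$ gives $H^{-1}\circ P\circ H = \Pi_n$-pushforward of $\tilde Q$, so setting $Q=H^{-1}\circ P\circ H$ we get $\hat Q=\tilde Q$ as a lift of $Q$.

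The final step is bookkeeping with the multiplier inequality. Since $Q(\theta,z)=(\theta+\alpha,q_\theta(z))$ has lift $\hat Q=\tilde Q$ with fibre maps $\tilde q_\theta$, the commutative diagram gives $q_{\langle n\theta\rangle} = \tilde q_\theta$ on the relevant fibres, hence $\partial_z q_{\langle n\theta\rangle}(\gamma(\langle n\theta\rangle)) = \partial_z\tilde q_\theta(\tilde\gamma(\theta))$ for the unfolding parametrization; taking supremum over $\theta\in\TT$ and using that $\theta\mapsto\langle n\theta\rangle$ is surjective onto $\TT$ yields $\sup_{\theta\in\TT}|\partial_z q_\theta(\gamma(\theta))| = \sup_{\theta\in\TT}|\partial_z\tilde q_\theta(\tilde\gamma(\theta))| < c$. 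One also records that $\Gamma$ remains an attracting invariant multi-curve for $Q$, which is immediate because $\kappa_f(\Gamma)$ for $Q$ equals $\kappa(\tilde\gamma)$ for $\tilde Q$, which is unchanged (an affine conjugacy does not alter the integral of $\log|\partial_z q_\theta|$ along the curve, or alternatively it is $<1$ since bounded by $c<1$). The main obstacle is the equivariance/descent step: one must be careful that the linearizing coordinates for the lifted curve genuinely descend to the quotient, rather than merely existing upstairs; the averaging-over-the-deck-group trick is the cleanest way to guarantee this, provided one verifies that averaging a fibred affine map over a finite cyclic group of rotations of the base again yields a continuous fibred affine map with nonvanishing linear part along the curve.
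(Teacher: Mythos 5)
Your overall route (lift $\Gamma$ to the unfolding $\tilde\gamma$, apply the known linearization for simple attracting invariant curves to the lifted dynamics $\hat P$, then push the linearizer back down through $\Pi_n$) is exactly the reduction the paper has in mind -- the paper offers no written proof, asserting the lemma is a ``direct consequence'' of the simple-curve results via the commutative diagram. You deserve credit for isolating the descent of $\tilde H$ as the real issue. However, neither of your two proposed fixes works, and the obstruction is not merely technical. First, the cohomological data for the linearizer upstairs is $\theta\mapsto\log|\partial_z\hat p_\theta(\tilde\gamma(\theta))|$, and while $\hat p_\theta=p_{\langle n\theta\rangle}$ is indeed invariant under the deck transformation $T(\theta)=\theta+1/n$, the curve $\tilde\gamma$ is emphatically not: $\tilde\gamma(\theta)$ and $\tilde\gamma(\theta+1/n)$ are \emph{different} branch points of the same fibre of $\Gamma$. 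So the data is not $T$-periodic and the solution of the cohomological equation has no reason to descend. Second, the averaging trick cannot rescue this: if $H(\theta,z)=(\theta,a(\theta)z+b(\theta))$ is any fibred affine map downstairs, then $\partial_z q_\theta$ at the $n$ points of the conjugated fibre equals $\frac{a(\theta)}{a(\theta+\alpha)}\,p_\theta'(w)$ for the $n$ points $w\in\Gamma_\theta$, i.e.\ all $n$ branch derivatives over a given base point are rescaled by one common factor whose logarithm has zero mean over $\TT$. Hence $\int_\TT\max_{w\in\Gamma_\theta}\log|\partial_z q_\theta|\,d\theta$ is a conjugacy invariant, and the conclusion $\sup_\theta|\partial_z q_\theta(\gamma(\theta))|<c<1$ (a sup over the whole multi-curve) is attainable only if $\int_\TT\max_{w\in\Gamma_\theta}\log|p_\theta'(w)|\,d\theta<0$. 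That is strictly stronger than $\kappa_f(\Gamma)<1$, which only controls the \emph{average} over the $n$ branches. The paper's own $2$-curve of Section 4 violates it: there $\log|2z_1|-\log|2z_2|$ is of order $\varepsilon$ while $\log|2z_1|+\log|2z_2|=\log|4\cC(\theta)|$ integrates to a negative quantity of order $\varepsilon^2$, so the integral of the max is positive for small $\varepsilon$.

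The upshot is that the literal downstairs statement cannot be proved by your (or any) descent argument; the lemma is only correct when read upstairs, i.e.\ the change of coordinates $H$ and the uniform bound refer to the lifted dynamics $\hat P(\theta,z)=(\theta+\frac{\tau+\alpha}{n},\hat p_\theta(z))$ and its simple invariant curve $\tilde\gamma$, where it is precisely the cited result of \cite{DoPo23,Ponce07} with nothing further to verify. Under that reading your first paragraph already constitutes the whole proof, your third paragraph's bookkeeping is fine, and your second paragraph should be deleted rather than repaired. If you want a genuine downstairs statement, you must either weaken the conclusion to a bound on the geometric mean of the branch derivatives (equivalently, on $|\partial_z(q_\theta^{\,n})|$ along an orbit segment that visits all branches), or add the hypothesis $\int_\TT\max_{w\in\Gamma_\theta}\log|p_\theta'(w)|\,d\theta<0$.
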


\begin{lemma}[The repulsor case]
    Let $P$ be a fibred polynomial dynamics over an irrational rotation, and $\Gamma$ be a repulsor invariant multi-curve. Then there exists a continuous change of coordinates $H(\theta,z)=(\theta,a(\theta)z+b(\theta))$ such that $\Gamma$ is still a repulsor invariant multi-curve for the conjugated fibred polynomial dynamics $Q=H^{-1}\circ P\circ H$. Moreover, if $Q(\theta,z)=(\theta+\alpha, q_\theta(z))$, then there exists $c>1$ such that
    $$
    \inf_{\theta\in\TT}|\partial_z q_\theta(\gamma(\theta))|>c.
    $$
\end{lemma}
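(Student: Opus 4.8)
The plan is to transport the linearization results for \emph{simple} repulsor invariant curves (from \cite{DoPo23,Ponce07,DominguezThesis}) to $\TT\times\CC$ through the covering $\Pi_n$ and the commutative diagram (\ref{comm_diag}). First I would unfold: by Definition \ref{def_mult-curve}, $\Gamma$ being a repulsor invariant multi-curve for $P$ means precisely that its unfolding $\tilde\gamma:\TT\to\CC$ is an invariant curve in the sense of (\ref{cond_curve_inv}) for the lift $\tilde F_\tau(\theta,z)=(\theta+\beta,\hat p_\theta(z))$, with $\beta=\tfrac{\alpha+\tau}{n}$, which is again a fibred polynomial over an irrational rotation, and that $\kappa(\tilde\gamma)>1$ by the defining formula (\ref{multi-multiplier}). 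Applying the known linearization lemma for repulsor simple invariant curves to $(\tilde F_\tau,\tilde\gamma)$ yields a continuous $\hat H(\theta,z)=(\theta,\hat a(\theta)z+\hat b(\theta))$ conjugating $\tilde F_\tau$ to some $\hat Q(\theta,z)=(\theta+\beta,\hat q_\theta(z))$ with $\inf_\theta|\partial_z\hat q_\theta(\tilde\gamma(\theta))|>c_0>1$; since an affine fibred conjugacy changes $\log|\partial_z|$ along the curve by a coboundary, the multiplier is unchanged and $\Gamma$ remains a repulsor invariant multi-curve.

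The descent is then immediate \emph{provided} $\hat H$ can be chosen equivariant under the deck transformation $T(\theta,z)=(\theta+\tfrac1n,z)$ of $\Pi_n$: in that case $\hat a,\hat b$ are $\tfrac1n$-periodic, hence factor through continuous $a,b:\TT\to\CC$, and $H(\theta,z)=(\theta,a(\theta)z+b(\theta))$ is the desired conjugacy. Indeed, applying diagram (\ref{comm_diag}) to $H$ and $\hat H$ one gets $Q:=H^{-1}\circ P\circ H$ with $\Pi_n\circ\hat Q=Q\circ\Pi_n$, so $\partial_z q_\theta$ along $\Gamma$ coincides fibrewise with $\partial_z\hat q$ along $\tilde\gamma$, and the bound descends verbatim with $c=c_0$.

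The main obstacle is exactly this equivariance, and it is not formal: the construction used for simple curves first translates the curve to $z\equiv 0$ (so $\hat b=-\tilde\gamma$) and then solves a cohomological inequality $\varphi(\theta)-\varphi(\theta+\beta)>\log c-\log|\hat p_\theta'(\tilde\gamma(\theta))|$ for $\varphi=\log|\hat a|$, and neither ingredient is $\tfrac1n$-periodic, so the resulting $\hat H$ does not descend. To repair this I would run the argument simultaneously on all $n$ lifts $\tilde\gamma^{(k)}(\theta)=(\theta,\tilde\gamma(\theta+\tfrac kn))$ of $\Gamma$ — which are again $\tilde F_\tau$-invariant with multiplier $\kappa_f(\Gamma)$ because $\tilde F_\tau$ commutes with $T$ — keeping $\hat b\equiv 0$ (one cannot affinely centre $n$ fibre points at once, and one does not need to) and replacing the forcing term by $M(\theta)=\min_{0\le k<n}\log|\hat p_\theta'(\tilde\gamma^{(k)}(\theta))|$, which \emph{is} $\tfrac1n$-periodic. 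One then solves $\varphi(\theta)-\varphi(\theta+\beta)>\log c-M(\theta)$ with $\varphi$ $\tfrac1n$-periodic by approximating the zero-mean part of $\log c-M$ by a coboundary of the irrational rotation on the quotient circle and absorbing the residual in the available slack; this requires the mean of $M$ over $\TT$ to be positive, and it is precisely here that the standing hypotheses on $P$ restricting $\partial_z P$ on $\Gamma$ (injectivity on the curve and absence of critical points there, as in \cite{Ponce07}) enter — note that a repulsor multi-curve cannot meet the critical set, since $|\partial_z q_\theta|>c>1$ forbids it. The remaining steps — composing affine fibred maps and reading off $\partial_z q_\theta(\cdot)=\tfrac{a(\theta)}{a(\theta+\alpha)}p_\theta'(\cdot)$ along $\Gamma$ — are routine.
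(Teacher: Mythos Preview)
Your opening paragraph \emph{is} the paper's argument in its entirety: the paper does not prove this lemma but declares it (together with its attracting companion) a ``direct consequence of the ones in \cite{DominguezThesis,DoPo23,Ponce07} through the commutative diagram (\ref{comm_diag}).'' That is, unfold $\Gamma$ to the simple invariant curve $\tilde\gamma$ for the lift $\tilde F_\tau$, apply the known repulsor linearization there, and read off the conclusion. The paper says nothing about descent, so on that level your proposal matches and in fact goes well beyond what the paper provides.

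Where you go further --- insisting that $H$ must live on the base $\TT\times\CC$ and hence that $\hat H$ be $T$-equivariant --- you raise a legitimate point the paper glosses over, but your proposed repair has a genuine gap. To solve the inequality $\varphi(\theta)-\varphi(\theta+\beta)>\log c-M(\theta)$ with a $\tfrac1n$-periodic $\varphi$ and some $c>1$, you need $\int_\TT M\,d\theta>0$. The hypothesis $\kappa_f(\Gamma)>1$ gives only $\int_\TT\log|\hat p_\theta'(\tilde\gamma^{(k)}(\theta))|\,d\theta=\log\kappa_f(\Gamma)>0$ for each $k$ (these integrals all agree via $\theta\mapsto\theta+\tfrac kn$); the integral of the \emph{pointwise minimum} $M$ can be strictly smaller, and nothing you cite rules out $\int M\le 0$. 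Neither injectivity on the curve nor absence of critical points forces $\int M>0$ --- those conditions merely keep $M$ finite --- and your parenthetical that a repulsor multi-curve cannot meet the critical set because $|\partial_z q_\theta|>c>1$ is circular, invoking the conclusion under proof. So the equivariant descent is not established as written; either an additional argument is needed to control $\int M$, or the statement is really intended upstairs (with $\gamma$ the unfolding and $Q$ the lift), in which case your first paragraph already suffices and the descent discussion is superfluous.
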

Also, the basin of attraction is well-defined in the attracting case. 

\begin{lemma}\label{multi_tub}
    Let $P$ be a fibred polynomial dynamics over an irrational rotation, and let $\Gamma$ be an attracting invariant \emph{n}-curve. Then there exists an open set $\mathcal{T}\subset \TT\times\CC$ containing the multi-curve $\Gamma$, and such that every point in $\mathcal{T}$ is attracted to $\Gamma$, i.e., $z_\theta\in\mathcal{T}_\theta$ then 
    $$
    \emph{dist}(P^n(\theta, z_\theta), \Gamma)\to 0,\ \emph{as}\ n\to\infty.
    $$
    Moreover, for every $\theta\in\TT$, the fiber $\mathcal{T}_\theta$ consists of \emph{n}-components each of which contains a point of $\Gamma_\theta$.
\end{lemma}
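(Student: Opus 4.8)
The plan is to reduce everything to the already-established local theory for simple invariant curves via the commutative diagram (\ref{comm_diag}). Since $\Gamma$ is an attracting invariant $n$-curve, by Definition \ref{def_mult-curve} its unfolding curve $\tilde\gamma:\TT\to\CC$ is an invariant curve for the lifted fibred polynomial $\hat{P}_\tau(\theta,z)=(\theta+(\tau+\alpha)/n,\hat p_\theta(z))$, and by definition of the fibred multiplier $\kappa_f(\Gamma)=\kappa(\tilde\gamma)<1$, so $\tilde\gamma$ is an attracting invariant curve for $\hat P_\tau$ in the classical sense of Section \ref{sec_inv_curves}. Note $(\tau+\alpha)/n$ is again an irrational rotation of $\TT$ (it is irrational because $\alpha$ is, and $\hat P_\tau$ is a genuine fibred polynomial dynamics over it), so the hypotheses of the tubular-neighbourhood result for attracting invariant curves from \cite{DoPo23, Ponce07} apply verbatim.

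First I would invoke that result to obtain an open set $\tilde{\mathcal{T}}\subset\TT\times\CC$ containing the unfolding curve $\tilde\gamma(\TT)$, invariant under $\hat P_\tau$, every point of which is attracted to $\tilde\gamma(\TT)$ under iteration of $\hat P_\tau$. I may also arrange, shrinking if necessary, that for each $\theta$ the fiber $\tilde{\mathcal{T}}_\theta$ is a single topological disk (or at least connected) around $\tilde\gamma(\theta)$ — this is part of the conclusion of the linearization/tubular neighbourhood statement, since $\tilde{\mathcal{T}}$ can be taken of the form $H^{-1}(\{|z|<\varepsilon\})$ in the linearizing coordinates $H$ of the attracting-case Lemma. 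Then I would set $\mathcal{T}:=\Pi_n(\tilde{\mathcal{T}})$, which is open because $\Pi_n$ is a covering map (in particular open), and which contains $\Pi_n(\tilde\gamma(\TT))=\Gamma$.

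Next I would check the dynamical invariance and attraction properties of $\mathcal{T}$ by chasing the diagram (\ref{comm_diag}): $P\circ\Pi_n=\Pi_n\circ\hat P_\tau$, hence $P(\mathcal{T})=P(\Pi_n(\tilde{\mathcal{T}}))=\Pi_n(\hat P_\tau(\tilde{\mathcal{T}}))\subset\Pi_n(\tilde{\mathcal{T}})=\mathcal{T}$, and by induction $P^n\circ\Pi_n=\Pi_n\circ\hat P_\tau^{\,n}$. Since $\Pi_n$ is a local isometry in the $\CC$-coordinate and merely re-indexes the $\TT$-coordinate, $\operatorname{dist}(P^n(\theta,z_\theta),\Gamma)=\operatorname{dist}(\Pi_n\hat P_\tau^{\,n}(\tilde\theta,z_\theta),\Pi_n(\tilde\gamma(\TT)))\le\operatorname{dist}(\hat P_\tau^{\,n}(\tilde\theta,z_\theta),\tilde\gamma(\TT))\to 0$ for any lift $(\tilde\theta,z_\theta)\in\tilde{\mathcal{T}}$ of $(\theta,z_\theta)$, by the attraction property of $\tilde{\mathcal{T}}$. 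Finally, for the fiber structure: $\Pi_n^{-1}(\{\theta\}\times\CC)$ consists of the $n$ preimages $\theta_j=(\theta+j)/n$, $j=0,\dots,n-1$, and $\mathcal{T}_\theta=\bigcup_{j}\tilde{\mathcal{T}}_{\theta_j}$ (viewed in the $z$-coordinate); each $\tilde{\mathcal{T}}_{\theta_j}$ is connected and contains the single point $\tilde\gamma(\theta_j)$, and these $n$ points are exactly the $n$ points of $\Gamma_\theta$ (by the remark that $|\Gamma_\theta|=n$ and the description (\ref{eq_unf1})), so $\mathcal{T}_\theta$ has the required $n$ components each meeting $\Gamma_\theta$ once — after possibly shrinking $\tilde{\mathcal{T}}$ so the $n$ disks $\tilde{\mathcal{T}}_{\theta_j}$ are pairwise disjoint, which is possible by uniform continuity of $\tilde\gamma$ and compactness.

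The main obstacle, and the only place where care is genuinely needed, is the last point: ensuring that after pushing forward by $\Pi_n$ the fibers $\mathcal{T}_\theta$ really do split into exactly $n$ connected components rather than fewer (two of the disks $\tilde{\mathcal{T}}_{\theta_j}$ and $\tilde{\mathcal{T}}_{\theta_k}$ could a priori overlap as subsets of $\CC$, since $\Pi_n$ identifies different base points but not different $z$-values). This is handled by a uniform-shrinking argument: since $\tilde\gamma$ is continuous on the compact $\TT$ and injective (its image is a simple $n$-curve), the $n$ points $\tilde\gamma(\theta_0),\dots,\tilde\gamma(\theta_{n-1})$ are distinct and vary continuously, so there is a uniform lower bound $\delta>0$ on their pairwise distances; taking $\tilde{\mathcal{T}}$ with fibers of diameter $<\delta/2$ (allowed, as the tubular neighbourhood can be chosen arbitrarily thin) forces the disjointness. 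Everything else is a formal consequence of the covering-space diagram and the cited results for simple attracting invariant curves.
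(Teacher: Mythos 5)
Your proposal is correct and follows the same route the paper intends: the paper derives Lemma \ref{multi_tub} as a ``direct consequence'' of the tubular-neighbourhood results for simple attracting invariant curves in \cite{DoPo23, Ponce07}, transported through the covering diagram (\ref{comm_diag}), exactly as you do. You in fact supply more detail than the paper, in particular the uniform-shrinking argument guaranteeing that the $n$ projected disks in each fiber stay pairwise disjoint; the only loose phrase is calling $\Pi_n$ distance-nonincreasing (it expands the base coordinate by $n$), but since the distance to $\Gamma$ is controlled by the fiberwise distance in the $\CC$-coordinate, which $\Pi_n$ does preserve, the conclusion stands.
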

The open set $\mathcal{T}$ defined in the above lemma, may be thought as a (neighborhood) \textbf{multi-tube} around the multi-curve $\Gamma$. This allows us to formally define its basin of attraction. 

\begin{definition}
    Let $P$ be a fibred polynomial dynamics over an irrational rotation, and let $\Gamma$ be an attracting invariant multi-curve. If $\mathcal{O}^+(\theta,z)$ defines the forward orbit under $P$ of a point $(\theta,z)$, then 
    $$
    \mathcal{A}(\Gamma)=\{(\theta,z):\emph{dist}(\mathcal{O}^+(\theta,z),\Gamma)\to0,\ n\to\infty\}
    $$
    is called \textbf{the basin of attraction of the multi-curve} $\gamma$.
\end{definition}

Analogous to the simply invariant case, we have that 
$$
\mathcal{A}(\Gamma)=\bigcup_{n\geq0}P^{-n}(\mathcal{T}),
$$
where $\mathcal{T}$ is the invariant multi-tube defined in Lemma \ref{multi_tub}.
\begin{corollary}
    If $\Gamma$ is an attracting invariant multi-curve, then its basin of attraction $\mathcal{A}(\Gamma)$, is an open subset of $\TT\times\CC$. 
\end{corollary}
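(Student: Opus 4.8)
The plan is to reduce the statement directly to the analogous result for simple invariant curves, exploiting the characterization $\mathcal{A}(\Gamma)=\bigcup_{n\geq 0}P^{-n}(\mathcal{T})$ established just above, together with the fact that $\mathcal{T}$ is open by Lemma \ref{multi_tub}. First I would observe that $P$ is continuous, hence each iterate $P^n:\TT\times\CC\to\TT\times\CC$ is continuous, so each preimage $P^{-n}(\mathcal{T})$ is open. A countable union of open sets is open, so $\mathcal{A}(\Gamma)$ is open. This is the entire argument at the level of point-set topology; the only thing that needs care is justifying the displayed equality $\mathcal{A}(\Gamma)=\bigcup_{n\geq 0}P^{-n}(\mathcal{T})$ itself, which the text asserts by analogy with the simple-curve case.

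To make that equality self-contained I would argue both inclusions. For $\supseteq$: if $(\theta,z)\in P^{-n}(\mathcal{T})$ for some $n$, then $P^n(\theta,z)\in\mathcal{T}$, and by Lemma \ref{multi_tub} the forward orbit of $P^n(\theta,z)$ has distance to $\Gamma$ tending to $0$; since $\mathcal{O}^+(\theta,z)$ differs from $\mathcal{O}^+(P^n(\theta,z))$ only in finitely many initial terms, $\operatorname{dist}(\mathcal{O}^+(\theta,z),\Gamma)\to 0$, so $(\theta,z)\in\mathcal{A}(\Gamma)$. For $\subseteq$: if $(\theta,z)\in\mathcal{A}(\Gamma)$, then $\operatorname{dist}(P^k(\theta,z),\Gamma)\to 0$, so eventually $P^k(\theta,z)$ lies in any prescribed neighborhood of $\Gamma$; choosing the neighborhood to be $\mathcal{T}$ gives some $n$ with $P^n(\theta,z)\in\mathcal{T}$, i.e. $(\theta,z)\in P^{-n}(\mathcal{T})$. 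Here one uses that $\mathcal{T}$ contains $\Gamma$ and is open, hence contains a uniform-in-$\theta$ neighborhood of the compact set $\Gamma$; this is exactly the content of Lemma \ref{multi_tub}, applied through the unfolding, and it is the one place where the multi-curve structure (as opposed to a bare invariant set) is genuinely used.

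The main obstacle — such as it is — is purely bookkeeping: making sure that ``$\operatorname{dist}(\mathcal{O}^+(\theta,z),\Gamma)\to 0$'' is interpreted as $\operatorname{dist}(P^k(\theta,z),\Gamma)\to 0$ as $k\to\infty$ (the definition writes $n\to\infty$, matching the iterate index), and that $\mathcal{T}$ being an \emph{invariant} multi-tube — i.e. $P(\mathcal{T})\subseteq\mathcal{T}$, which follows from Lemma \ref{multi_tub} since points of $\mathcal{T}$ stay near $\Gamma$ and $\Gamma$ is invariant — guarantees the nesting $P^{-n}(\mathcal{T})\subseteq P^{-(n+1)}(\mathcal{T})$, so the union is in fact an increasing union. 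None of this requires any estimate beyond continuity of $P$ and the openness of $\mathcal{T}$; the analytic substance was already absorbed into Lemma \ref{multi_tub} via the linearization in the attracting case and the commutative diagram (\ref{comm_diag}).
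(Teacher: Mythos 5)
Your argument is correct and follows exactly the route the paper intends: the identity $\mathcal{A}(\Gamma)=\bigcup_{n\geq0}P^{-n}(\mathcal{T})$ together with the openness of the multi-tube $\mathcal{T}$ from Lemma \ref{multi_tub} and continuity of $P$ immediately give openness of the basin. Your additional verification of the two inclusions (using compactness of $\Gamma$ to get a uniform neighborhood inside $\mathcal{T}$) correctly supplies the details the paper leaves implicit.
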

\section{Invariant 2-curves for small perturbation of a static quadratic dynamics}

In this section multi-curves are exhibited in the lowest grade for polynomials where interesting dynamics appear: the quadratic case (recall trivial Example \ref{ex_triv} is of degree one). 
\subsection{Fibred quadratic polynomials}
Consider the family $\Fat_\alpha$ of \textbf{canonical} fibred quadratic polynomials 

\begin{equation}\label{gen_qua}
    \begin{array}{rccl}
        P^\alpha_\mathcal{C}: & \TT\times\CC & \to & \TT\times\CC \\
           & (\theta,z) & \mapsto & (\theta + \alpha, z^2+\mathcal{C}(\theta)),
    \end{array}
\end{equation}
where $\alpha\in\TT$ is the irrational rotation angle and $\mathcal{C}:\TT\to\CC$ is a continuous function that may be thought as a parameter. A wider family of quadratic polynomial dynamics has been widely studied by Sester in \cite{Sester}, here the author defines the corresponding \emph{principal cardioid of the fibred Mandelbrot set}. We will be only focusing on those quadratic polynomials with a good normalization, as given in \ref{gen_qua}.

The idea in the construction will be to find invariant $n$-curves by choosing a parameter $\mathcal{C}:\TT\to\CC$ that \emph{wanders} through some special places around the classical Mandelbrot set. 

Analogous to the classic one-dimensional complex case, under a mild condition on the quadratic coefficient, every quadratic polynomial can be normalized to the form in (\ref{gen_qua}) under a suitable fibred change of coordinates, see Proposition 2.1 and 2.2 in \cite{Sester} for further reference on this.

\subsubsection{The `static' fibred polynomial}

Let $P(\theta,z) = (R_\alpha(\theta), p_\theta(z))$ be a fibred polynomial dynamics, where $p_\theta(z)$ is a degree $d$ polynomial and $\alpha=0$, in other words, $P(\theta,z)$ may be viewed as a continuous parametrized family of polynomial dynamics. We refer to this case as the \emph{static fibred polynomial} case. 

Suppose that $\mathcal{Z}\subset \TT\times\CC$ is a connected component of the continuous solution to the \emph{fixed-points} equation: 
\[p_\theta(z(\theta)) = z(\theta).\]
Since $\alpha=0$, it follows that $P\big|_{\mathcal{Z}}$ is a homeomorphism. Suppose also that $\mathcal{Z}$ is a $n$-curve, then it is easy to see that $\mathcal{Z}$ is an \emph{invariant multi-curve} according to Definition \ref{def_mult-curve}. 

The strategy of this chapter is to construct invariant multi-curves through the curves generated by the set of fixed points of the static fibred polynomial. We will consider suitable parametric curves (small circle with the classical parabolic parameter $c_0=1/4$ in its interior). Then, after a post-composition with a Lagrange Interpolation Polynomial, adding the fibred nature with the irrational rotation as described in Example \ref{ex_lag}, we will maintain the invariance of the multi-curve for a fibred polynomial dynamics which will be still quadratic since the Lagrange Interpolation polynomial will be linear.  
\subsection{Fixed Points of the quadratic polynomial}
Consider the {canonical} form of a quadratic polynomial
\[q_c(z) = z^2 + c, \ z\in\CC^*,\]
it is well known that the fixed points of $q_c$ are given by 
\begin{equation}\label{static_fixed}
  z_1(c) = \frac{1}{2} + \sqrt{\frac{1}{4} - c}, \qquad \& \qquad z_2(c) = \frac{1}{2} - \sqrt{\frac{1}{4} - c}  
\end{equation}
If $c=1/4$, $q_c$ possesses one, and only one fixed point; otherwise, there are always two distinct fixed points. It is a classical well-known fact the $c=1/4$ is a parabolic parameter of the Mandelbrot set, whose Julia set has the form of a Cauliflower. 

Let $\varepsilon >0$, $x_0>0$ such that

\begin{equation}\label{4.a}
0\leq x_0<1/4\ \ \ \text{and}\ \ \ \ \frac{\varepsilon^2}{\frac{1}{4}-\varepsilon^2x_0}<1
\end{equation}

and consider the parametric curve $\cC:\TT\rightarrow \CC$ given by,

\begin{equation}\label{model_sta}
    \cC(\theta)=\frac{1}{4}-\varepsilon^2 x_0-\varepsilon^2 e^{2\pi i \theta},
\end{equation}
that is, $\cC:\TT\to \CC$ is a simple continuous loop around the parabolic parameter $c=1/4$. If we keep tracking the fixed points $z_1(\cC(\theta)), z_2(\cC(\theta))$ when $\theta$ goes from $0$ to $1$ (in $\TT$), we see that this tour to the loop of parameters gives rise to a transposition of the fixed points, as we will see in the next lines.

Now, if we substitute the form of $c=\cC(\theta)$ in the solutions (\ref{static_fixed}) we have 
$$z_1(\theta)=\frac{1}{2}+ \epsilon e^{\pi i \theta}\sqrt{1+x_0e^{-2\pi i \theta}}$$
and
$$z_2(\theta)= \frac{1}{2}-\epsilon e^{\pi i \theta}\sqrt{1+x_0e^{-2\pi i \theta}},$$
and they form a set of fixed points, and if we take $-1 = e^{\pi i}$ their concatenation $\tilde\gamma : \TT\rightarrow \CC$
    $$\tilde\gamma(\theta) = 1/2 + \varepsilon e^{2\pi i\theta}\sqrt{1 + x_0e^{-4\pi i\theta}},$$
induces an invariant 2-curve $\Gamma=(z_1 \ z_2)$ for the static quadratic polynomial $P(\theta,z)$. 

\begin{remark}
As was mentioned above, this $\Gamma$ is our candidate for an invariant $2$-curve for a fibred quadratic dynamics.     
\end{remark}

Consider the static fibred quadratic dynamics, that is, with $\alpha=0$,
\begin{equation}\label{static_pol}
\begin{array}{rccc}
Q: & \TT\times\CC & \to & \TT\times\CC \\
   & (\theta,z) & \mapsto & (\theta, q_{\cC(\theta)}(z)).
\end{array}
\end{equation}.
\begin{proposition}
    The $2$-curve $\Gamma = (z_1\ z_2)$ is invariant for the static fibred polynomial $Q(\theta,z) = (\theta, q_{\cC(\theta)}(z))$.
\end{proposition}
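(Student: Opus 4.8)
The plan is to verify Definition \ref{def_inv} directly for the static case $\alpha=0$: one must check that $q_{\cC(\theta)}(\tilde\gamma(\theta)) = \tilde\gamma(\theta)$ and, more carefully, that the concatenated pair $(z_1\ z_2)$ is genuinely a $2$-curve in the sense of Definition \ref{def_multi_curve}, i.e. that $\tilde\gamma$ as written is a well-defined simple closed continuous loop $\TT\to\CC$ whose induced curve in $\TT\times\CC$ has exactly two points in each fiber and turns twice around the base. Since $\alpha=0$, the invariance condition \eqref{cond_curve_inv} for the unfolding becomes simply the statement that $z_1(\cC(\theta))$ and $z_2(\cC(\theta))$ are fixed points of $q_{\cC(\theta)}$, which is immediate from \eqref{static_fixed}. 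So the substance of the proof is purely topological: showing that the two branches $z_1,z_2$ of the fixed-point equation glue continuously into a single loop of period $1$ in $\theta$ (equivalently, period $2$ for the argument variable), and that this loop is simple.

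First I would record that $z_j(\theta) = \tfrac12 \pm \varepsilon e^{\pi i\theta}\sqrt{1 + x_0 e^{-2\pi i\theta}}$, where the square root is the principal branch; condition \eqref{4.a}, namely $0\le x_0 < 1/4$, guarantees $1 + x_0 e^{-2\pi i\theta}$ stays in the right half-plane (indeed within distance $x_0<1$ of $1$), so $\sqrt{1 + x_0 e^{-2\pi i\theta}}$ is continuous and nonvanishing on $\TT$. Next I would check the monodromy: as $\theta$ runs from $0$ to $1$, the factor $e^{\pi i\theta}$ accumulates a factor $e^{\pi i} = -1$ while the square-root factor returns to its initial value, hence $z_1(1) = \tfrac12 - \varepsilon\sqrt{1+x_0} = z_2(0)$ and similarly $z_2(1) = z_1(0)$. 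This is exactly the transposition of fixed points mentioned in the text, and it is what allows the two arcs to be concatenated into one loop: reparametrizing by $\theta \mapsto 2\theta$, i.e. setting $\tilde\gamma(\theta) = \tfrac12 + \varepsilon e^{2\pi i\theta}\sqrt{1 + x_0 e^{-4\pi i\theta}}$, produces a genuinely $1$-periodic continuous map on $\TT$, and I would note $\tilde\gamma(\theta + 1/2) $ is the ``other'' fixed point over the base angle $2\theta$, so the induced curve $\theta\mapsto(\langle 2\theta\rangle,\tilde\gamma(\theta))$ has fiber cardinality $2$ everywhere, matching Definition \ref{def_multi_curve}.

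Then I would establish that $\tilde\gamma$ is simple, i.e. injective on $\TT$. Write $\tilde\gamma(\theta)-\tfrac12 = \varepsilon\, e^{2\pi i\theta}\sqrt{1+x_0e^{-4\pi i\theta}}$ and compute its squared modulus, $\varepsilon^2|1+x_0e^{-4\pi i\theta}|^2 = \varepsilon^2(1 + x_0^2 + 2x_0\cos 4\pi\theta)$; this already separates $\theta$ from the antipodal-type collisions but a cleaner route is to observe that $\big(\tilde\gamma(\theta)-\tfrac12\big)^2 = \varepsilon^2 e^{4\pi i\theta}(1+x_0 e^{-4\pi i\theta}) = \varepsilon^2\big(e^{4\pi i\theta} + x_0\big)$, which is injective as a function of $\theta\in\TT$ because $\theta\mapsto e^{4\pi i\theta}$ is injective on $\TT$ (the exponent completes exactly one full turn). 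Hence if $\tilde\gamma(\theta)=\tilde\gamma(\theta')$ then $e^{4\pi i\theta}=e^{4\pi i\theta'}$, forcing $\theta=\theta'$, so $\tilde\gamma$ is a simple closed curve; this is precisely the place where the normalization $\cC(\theta) = \tfrac14 - \varepsilon^2 x_0 - \varepsilon^2 e^{2\pi i\theta}$ (with the single loop around $c=1/4$) is used, and it is the only mildly delicate point. Finally, assembling the pieces: $\tilde\gamma$ is a simple closed curve inducing a $2$-curve $\Gamma$ in $\TT\times\CC$ via $\Pi_2$, each $z_j(\theta)$ is a fixed point of $q_{\cC(\theta)}$, and since $\alpha=0$ the unfolding $\tilde\gamma$ satisfies \eqref{cond_curve_inv} for the lift $\hat Q(\theta,z)=(\theta, q_{\cC(\theta)}(z))$ itself (no jump, $\tau$ irrelevant as $n$-fold business is trivial when the base map is the identity), so by Definition \ref{def_mult-curve} $\Gamma=(z_1\ z_2)$ is a (dynamically) invariant $2$-curve for $Q$. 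The main obstacle is nothing computational but rather making the topological bookkeeping airtight — specifically proving injectivity of $\tilde\gamma$ and correctly matching the monodromy of the two fixed-point branches with the reparametrization that turns them into a single $\TT$-loop.
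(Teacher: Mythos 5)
Your overall route is the same as the paper's: pass to the $2$-fold cover $\Pi_2$, observe that invariance of the unfolding $\tilde\gamma$ for the lift $\widehat Q(\theta,z)=(\theta,q_{\cC(\langle 2\theta\rangle)}(z))$ amounts to the fixed-point identity $q_{\cC(\langle 2\theta\rangle)}(\tilde\gamma(\theta))=\tilde\gamma(\theta)$, and check that identity from the explicit formula $\sqrt{\tfrac14-\cC(2\theta)}=\varepsilon e^{2\pi i\theta}\sqrt{1+x_0e^{-4\pi i\theta}}$ coming from (\ref{static_fixed}) and (\ref{model_sta}). The paper's proof is exactly this ``direct calculation''; your additional verification of the monodromy (that the two fixed-point branches transpose after one loop of the parameter, so they glue into a single $1$-periodic map whose induced curve has two points in every fiber) is a welcome expansion of what the paper leaves implicit.

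However, the step you yourself single out as the delicate one --- simplicity of $\tilde\gamma$ --- is wrong as written. The map $\theta\mapsto e^{4\pi i\theta}$ is \emph{not} injective on $\TT$: it winds twice, so $e^{4\pi i\theta}=e^{4\pi i\theta'}$ only forces $\theta'\in\{\theta,\theta+\tfrac12\}$, not $\theta'=\theta$. Indeed $(\tilde\gamma(\theta)-\tfrac12)^2=\varepsilon^2(e^{4\pi i\theta}+x_0)$ cannot possibly be injective, because $\tilde\gamma(\theta)$ and $\tilde\gamma(\theta+\tfrac12)$ are precisely the two square roots, i.e.\ the two fixed points sitting over the same base angle $\langle 2\theta\rangle$. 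To finish you must exclude the remaining collision $\tilde\gamma(\theta)=\tilde\gamma(\theta+\tfrac12)$: since $e^{2\pi i(\theta+1/2)}=-e^{2\pi i\theta}$ and $e^{-4\pi i(\theta+1/2)}=e^{-4\pi i\theta}$, one has $\tilde\gamma(\theta+\tfrac12)-\tfrac12=-\bigl(\tilde\gamma(\theta)-\tfrac12\bigr)$, so a collision would force $\tilde\gamma(\theta)=\tfrac12$, i.e.\ $1+x_0e^{-4\pi i\theta}=0$, which is impossible because $0\le x_0<\tfrac14<1$ by (\ref{4.a}) and $\varepsilon>0$. With that one-line repair the argument is complete; nothing else in the proposal needs changing.
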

\begin{proof}
Note that the lifting $\widehat Q$ of $Q$, is given by 
\begin{displaymath}
\begin{array}{rccc}
\widehat{Q}: & \TT\times\CC & \to & \TT\times\CC \\
   & (\theta,z) & \mapsto & (\theta, q_{\cC(\langle2\theta\rangle)}(z)),
\end{array}
\end{displaymath} 
with $\Pi_2(\theta,z)=(\langle 2\theta\rangle,z)$. A direct calculation shows that $q_{\cC(\langle2\theta\rangle)}(\tilde\gamma(\theta))=\tilde\gamma(\theta)$, and hence $\Gamma$ is invariant for $Q$ (here $\alpha=0$). 
\end{proof}

Note that if we permute the order of the fixed points $z_1$ and $z_2$, for $\Gamma=(z_2 \ z_1)$, we have another lifting curve $\tilde\gamma_2$ given by the relation
$$
\tilde\gamma_2(\theta) = \tilde\gamma(\theta + 1/2).
$$
We re-label both permutations by $\tilde\gamma_1$ and $\tilde\gamma_2$ respectively. It is not difficult to see that $\tilde\gamma_2$ is also invariant for $\widetilde Q$. 

\subsection{From Static to Fibred. The Post-Composition}\label{sec_multi_inv}
For $\alpha>0$ small enough, we want to ``transform'' $\widehat{Q}$ in such a way that $z_1$ and $z_2$ still form an invariant $2$-curve for a fibred quadratic polynomial. For each $\theta\in\TT$, consider the two pairs of points $(\tilde{\gamma}_1(\theta),\tilde{\gamma}_2(\theta))$ and $(\tilde{\gamma}_1(\theta + \frac{\alpha}{2}), \tilde{\gamma}_2(\theta + \frac{\alpha}{2}))$, and define the linear fibred map 
\begin{displaymath}
\begin{array}{cccc}
\Tilde{L}_{\alpha}: & \TT\times\CC & \to & \TT\times\CC\\
                    & (\theta,\zeta) & \mapsto & (\theta, \Tilde{\ell}_{\theta}(\zeta)),
\end{array}
\end{displaymath}
where $\Tilde{\ell}_{\theta}$ is given by the Lagrange interpolation polynomial (affine, since $n=2$) between the pairs of points considered above. Then, 
$$ \Tilde{\ell}_{\theta}(\tilde{\gamma}_{1}(\theta)) = \tilde{\gamma}_{1}(\theta+\frac{\alpha}{2}), \qquad \text{and} \qquad \Tilde{\ell}_{\theta}(\tilde{\gamma}_{2}(\theta)) = \tilde{\gamma}_{2}(\theta + \frac{\alpha}{2}). $$

The following result is immediate from the construction. 
\begin{proposition}\label{alpha_pos}
For $\alpha>0$, and $\Tilde{\ell}_{\theta}$ and $q_{\cC(\theta)}$ as above, define the fibred quadratic polynomial 
$$ \widetilde{Q} = \widetilde{L}_{\alpha}\circ \widehat{Q}, $$
that is,
\begin{displaymath}
\begin{array}{cccc}
\widetilde{Q}: & \TT\times\CC  & \to & \TT\times\CC \\
           & (\theta, \zeta) & \mapsto & (\theta + \frac{\alpha}{2}, \Tilde{\ell}_{\theta}\circ q_{\cC(\langle 2\theta\rangle)}(\zeta)). 
\end{array}
\end{displaymath}
Then the \emph{unfolding} curve $\tilde{\gamma}$ is an invariant curve for $\widetilde{Q}$.
\end{proposition}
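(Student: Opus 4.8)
The plan is to verify directly that the unfolding curve $\tilde\gamma$, viewed as a section $\theta\mapsto(\theta,\tilde\gamma(\theta))$ of $\TT\times\CC$, satisfies the invariance equation \eqref{cond_curve_inv} for the lifted fibred polynomial $\widetilde Q(\theta,\zeta)=(\theta+\tfrac{\alpha}{2},\,\tilde\ell_\theta\circ q_{\cC(\langle 2\theta\rangle)}(\zeta))$. By Definition \ref{def_mult-curve}, this is exactly what it means for $\Gamma=(z_1\ z_2)$ to be a dynamically invariant $2$-curve for $\widetilde Q$ with jumping integer $\tau$ determined by $\widetilde Q$; so the entire task reduces to a one-line composition check, and the real content is organizing the two ingredients that are already available.

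First I would recall, from the previous proposition, that $q_{\cC(\langle 2\theta\rangle)}(\tilde\gamma(\theta))=\tilde\gamma(\theta)$ for every $\theta\in\TT$: the static lift $\widehat Q$ fixes the unfolding curve pointwise, because $\tilde\gamma(\theta)$ is by construction a fixed point of $q_{\cC(\langle 2\theta\rangle)}$ (this is the defining property of $z_1,z_2$ as fixed-point branches). Second, I would invoke the defining property of the affine Lagrange interpolator $\tilde\ell_\theta$ from Section \ref{sec_multi_inv}, namely $\tilde\ell_\theta(\tilde\gamma_i(\theta))=\tilde\gamma_i(\theta+\tfrac{\alpha}{2})$ for $i=1,2$. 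Here one must be careful about which unfolding is being used: the single-valued curve $\tilde\gamma=\tilde\gamma_1$ on $\TT$ is the one whose two "branches" $z_1,z_2$ are picked out by the covering, and the relation $\tilde\gamma_2(\theta)=\tilde\gamma(\theta+1/2)$ lets one phrase both interpolation conditions in terms of the single function $\tilde\gamma$. Composing the two ingredients gives, for every $\theta$,
\[
\tilde\ell_\theta\big(q_{\cC(\langle 2\theta\rangle)}(\tilde\gamma(\theta))\big)=\tilde\ell_\theta\big(\tilde\gamma(\theta)\big)=\tilde\gamma\Big(\theta+\frac{\alpha}{2}\Big),
\]
which is precisely condition \eqref{cond_curve_inv} for $\widetilde Q$ (with base rotation $\theta\mapsto\theta+\tfrac{\alpha}{2}$). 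Hence $\tilde\gamma$ is invariant for $\widetilde Q$, and, through the commutative diagram \eqref{comm_diag}, $\Gamma$ is a dynamically invariant $2$-curve for the fibred quadratic polynomial obtained by projecting $\widetilde Q$ down under $\Pi_2$.

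I would also remark, to make the statement clean, that $\widetilde Q$ is genuinely a fibred \emph{quadratic} polynomial: $q_{\cC(\langle 2\theta\rangle)}$ has degree two and $\tilde\ell_\theta$ is affine (degree one, since $n=2$), so the composition $\tilde\ell_\theta\circ q_{\cC(\langle 2\theta\rangle)}$ is a degree-two polynomial in $\zeta$, and its coefficients depend continuously on $\theta$ because $\cC$, the fixed-point branches $\tilde\gamma_1,\tilde\gamma_2$, and hence the interpolation coefficients all do. The main (and essentially only) obstacle is bookkeeping: keeping straight the argument shifts — $\langle 2\theta\rangle$ inside $q$ coming from the $2$-fold covering $\Pi_2$, versus the $+\tfrac{\alpha}{2}$ shift produced by $\widetilde L_\alpha$ — and making sure the two interpolation conditions on $\tilde\gamma_1$ and $\tilde\gamma_2$ are invoked at the correct base points so that they assemble into the single identity \eqref{cond_curve_inv} for $\tilde\gamma$ on all of $\TT$. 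No analytic estimate is needed; once the shifts are matched the proof is a direct substitution.
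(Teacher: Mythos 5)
Your proof is correct and is exactly the verification the paper has in mind: the paper offers no written proof, declaring the proposition ``immediate from the construction,'' and your two-step substitution (the fixed-point identity $q_{\cC(\langle 2\theta\rangle)}(\tilde\gamma(\theta))=\tilde\gamma(\theta)$ from the preceding proposition, followed by the defining interpolation property $\tilde\ell_\theta(\tilde\gamma(\theta))=\tilde\gamma(\theta+\tfrac{\alpha}{2})$) is precisely that immediate argument, spelled out with the correct bookkeeping of base points. No gaps.
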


\begin{proposition}
Let $\alpha \in (0,1)\backslash \mathbb{Q}$ small enough. For $\varepsilon$ and $x_0$ as in (\ref{4.a}), there exists a fibred quadratic polynomial in canonical form
$$(\theta,z)\mapsto (\theta + \alpha, z^2+\mathcal{C}_0(\theta))$$
containing an attracting invariant 2-curve. If $x_0=0$, the invariant 2-curve is indifferent. 
\end{proposition}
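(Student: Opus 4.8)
The plan is to start from Proposition \ref{alpha_pos}, which already hands us a fibred quadratic polynomial $\widetilde Q(\theta,\zeta) = (\theta + \tfrac{\alpha}{2},\ \tilde\ell_\theta\circ q_{\cC(\langle 2\theta\rangle)}(\zeta))$ for which the unfolding curve $\tilde\gamma$ is invariant, hence $\Gamma = (z_1\ z_2)$ is a dynamically invariant $2$-curve for the fibred polynomial $Q_\alpha := \Pi_2\circ\widetilde Q$ (the base rotation of $Q_\alpha$ being by $\alpha$). The two remaining tasks are: (i) put $Q_\alpha$ into the canonical form $(\theta,z)\mapsto(\theta+\alpha, z^2 + \mathcal C_0(\theta))$ via a fibred affine change of coordinates $H(\theta,z)=(\theta, a(\theta)z+b(\theta))$, which by the normalization discussed after (\ref{gen_qua}) (and Propositions 2.1, 2.2 in \cite{Sester}) is possible as long as the leading coefficient of $\tilde\ell_\theta\circ q_{\cC(\langle2\theta\rangle)}$ is a nonvanishing continuous function of $\theta$ — and since $\tilde\ell_\theta$ is affine with slope $\tfrac{\tilde\gamma_1(\theta+\alpha/2)-\tilde\gamma_2(\theta+\alpha/2)}{\tilde\gamma_1(\theta)-\tilde\gamma_2(\theta)}$, which is continuous and nonzero because the $z_i$ never collide for $c$ on the loop $\cC(\TT)$ (guaranteed by $x_0<1/4$ so $\tfrac14 - \cC(\theta)\neq 0$), this works; the conjugated curve $\mathcal C_0$ depends continuously on $\theta$ and on $\alpha$; (ii) compute the fibred multiplier of $\Gamma$ and show it is $<1$ when $x_0>0$ and $=1$ when $x_0=0$.

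For step (ii) I would use the definition (\ref{multi-multiplier}): $\kappa_f(\Gamma) = \exp\big(\int_\TT \log|\partial_\zeta(\tilde\ell_\theta\circ q_{\cC(\langle2\theta\rangle)})(\tilde\gamma(\theta))|\,d\theta\big)$. By the chain rule this integrand splits as $\log|\tilde\ell_\theta'| + \log|q_{\cC(\langle2\theta\rangle)}'(\tilde\gamma(\theta))| = \log|\tilde\ell_\theta'| + \log|2\tilde\gamma(\theta)|$. The slope $\tilde\ell_\theta'$ telescopes under integration: writing $\Delta(\theta) = \tilde\gamma_1(\theta)-\tilde\gamma_2(\theta)$ we have $\tilde\ell_\theta' = \Delta(\theta+\alpha/2)/\Delta(\theta)$, and since $\theta\mapsto\theta+\alpha/2$ is measure-preserving on $\TT$, $\int_\TT\log|\tilde\ell_\theta'|\,d\theta = 0$. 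Hence $\kappa_f(\Gamma) = \exp\big(\int_\TT \log|2\tilde\gamma(\theta)|\,d\theta\big)$, i.e. the multiplier of $\Gamma$ equals the (geometric-mean) multiplier of the static fixed-point configuration — exactly the quantity $\exp\big(\tfrac12\int_\TT\log|q_{\cC(\theta)}'(z_1(\theta))| + \log|q_{\cC(\theta)}'(z_2(\theta))|\,d\theta\big)$ unfolded. So everything reduces to the static computation: $q_c'(z_i) = 2z_i = 1 \pm 2\sqrt{\tfrac14 - c}$, and with $c = \cC(\theta) = \tfrac14 - \varepsilon^2 x_0 - \varepsilon^2 e^{2\pi i\theta}$ we get $\tfrac14 - c = \varepsilon^2(x_0 + e^{2\pi i\theta})$, so the two multipliers are $1 \pm 2\varepsilon\sqrt{x_0 + e^{2\pi i\theta}}$ and their product (over the two sheets, combined into one integral over $\TT$ via the unfolding) is $|1 - 4\varepsilon^2(x_0 + e^{2\pi i\theta})|$.

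Thus $\log\kappa_f(\Gamma) = \tfrac12\int_\TT \log|1 - 4\varepsilon^2(x_0 + e^{2\pi i\theta})|\,d\theta = \tfrac12\int_\TT \log|(1-4\varepsilon^2 x_0) - 4\varepsilon^2 e^{2\pi i\theta}|\,d\theta$. This is a Jensen-formula integral: $\int_0^1 \log|A - B e^{2\pi i\theta}|\,d\theta = \log\max(|A|,|B|)$. Here $A = 1-4\varepsilon^2 x_0 > 0$ and $B = 4\varepsilon^2$, and condition (\ref{4.a}) — rewritten as $\tfrac{\varepsilon^2}{\frac14 - \varepsilon^2 x_0} < 1$, i.e. $4\varepsilon^2 < 1 - 4\varepsilon^2 x_0$, i.e. $B < A$ — forces the value of the Jensen integral to be $\log A = \log(1 - 4\varepsilon^2 x_0)$. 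Therefore $\kappa_f(\Gamma) = (1 - 4\varepsilon^2 x_0)^{1/2}$, which is $<1$ precisely when $x_0 > 0$ and $=1$ when $x_0 = 0$. I expect the main obstacle to be the careful bookkeeping between the static picture and the unfolded fibred picture — making sure the factor of $\tfrac12$ from the $2$-to-$1$ unfolding, the telescoping of the Lagrange slope, and the identification of $\int_\TT\log|2\tilde\gamma(\theta)|\,d\theta$ with the averaged static multiplier are all consistent — together with verifying that the integrability ($L^1$) hypothesis holds, which it does since $A \neq B$ keeps $1 - 4\varepsilon^2(x_0 + e^{2\pi i\theta})$ away from $0$ (when $x_0>0$; when $x_0=0$ it vanishes only at $\theta$ with $e^{2\pi i\theta}$ on a specific ray, a finite set after accounting, still giving an $L^1$ integrand), so the definition of $\kappa_f$ applies. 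Finally, one notes the whole construction requires $\alpha$ irrational and small only to keep $\widetilde Q$'s normalization and the continuity of $\mathcal C_0$ under control; the multiplier itself is independent of $\alpha$, so the attracting/indifferent dichotomy is dictated entirely by the sign of $x_0$.
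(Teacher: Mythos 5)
Your proposal is correct, and its computational core coincides with the paper's: both reduce the multiplier of the unfolded curve to $\tfrac12\int_0^1\log|4\cC(\theta)|\,d\theta$ via the identity $4z_1(\theta)z_2(\theta)=4\cC(\theta)$, and both evaluate that integral by the Jensen/mean-value identity, obtaining $\log(1-4\varepsilon^2x_0)$ under condition (\ref{4.a}); your closed form $\kappa_f(\Gamma)=(1-4\varepsilon^2x_0)^{1/2}$ is exactly what the paper's computation yields, giving attracting for $x_0>0$ and indifferent for $x_0=0$. Where you genuinely diverge is in the passage from the static model to $\alpha>0$. The paper builds the canonical form explicitly, producing the normalizing function $u(\theta)$ as an infinite product and then verifying separately that $\int_\TT\log|2A(\theta)|\,d\theta=0$. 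You instead compute the multiplier of $\widetilde{Q}=\widetilde{L}_\alpha\circ\widehat{Q}$ before any normalization, observing that the Lagrange slope $\tilde\ell_\theta'=\Delta(\theta+\tfrac{\alpha}{2})/\Delta(\theta)$, with $\Delta=\tilde\gamma_1-\tilde\gamma_2$ continuous and nonvanishing, is a multiplicative coboundary over the rotation, so $\int_\TT\log|\tilde\ell_\theta'|\,d\theta=0$; the reduction to canonical form is then delegated to Sester's normalization. This is cleaner, avoids the infinite product entirely, and makes transparent why the multiplier does not depend on $\alpha$. Two small points to tighten: state explicitly that the fibred affine conjugacy $H(\theta,z)=(\theta,a(\theta)z+b(\theta))$ preserves the multiplier because $\int_\TT\log|a(\theta+\alpha)|\,d\theta=\int_\TT\log|a(\theta)|\,d\theta$ for continuous nonvanishing $a$ (this is what licenses computing $\kappa_f$ before normalizing); and your caveat about the integrand vanishing when $x_0=0$ is unnecessary, since (\ref{4.a}) gives the strict inequality $4\varepsilon^2<1-4\varepsilon^2x_0$, so $1-4\varepsilon^2(x_0+e^{2\pi i\theta})$ is bounded away from zero for every admissible $x_0$.
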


\begin{proof}
We prove this using the \emph{small perturbation} procedure given by the Lagrange Interpolation Polynomial post-composition, as described above.  
We define our perturbation on the static model,
$$P(\theta, z)= (\theta, z^2+\cC(\theta)),$$
where $\cC(\theta)$ is given by (\ref{model_sta}). We know that the curves
$$z_1(\theta)=\frac{1}{2}+ \epsilon e^{\pi i \theta}\sqrt{1+x_0e^{-2\pi i \theta}}$$
and
$$z_2(\theta)= \frac{1}{2}-\epsilon e^{\pi i \theta}\sqrt{1+x_0e^{-2\pi i \theta}},$$
form a set of fixed points, and their concatenation $\tilde\gamma : \TT\rightarrow \CC$
    $$\tilde\gamma(\theta) = 1/2 + \varepsilon e^{2\pi i\theta}\sqrt{1 + x_0e^{-4\pi i\theta}},$$
induces an invariant 2-curve $\Gamma=(z_1 \ z_2)$ for the static quadratic polynomial $P(\theta,z)$. 

Now, we calculate the multiplier of the (unfolding) invariant curve of the lifted system $\widetilde{P}(\theta,z)$.

For this, note that, if $\Tilde{\gamma}(\theta)$ is the (unfolding) invariant curve for the lifted fibred polynomial, then $\Tilde{\gamma}(\theta)\big|_{[0,1/2]}=z_1(2\theta)$ and $\Tilde{\gamma}(\theta)\big|_{[1/2,1]} = z_2(2\theta)$, and hence 

\begin{align*}
    \log (\kappa(\Tilde{\gamma}(\theta))) & = \int_{\TT} \log|2\Tilde{\gamma}(\theta)|d\theta = \int_0^{1/2}\log|2\Tilde{\gamma}(\theta)|d\theta + \int_{1/2}^1\log |2\Tilde{\gamma}(\theta)|d\theta \\ 
        & = \int_0^{1/2}\log |2z_1(2\theta)|d\theta + \int_{1/2}^1\log |2z_2(2\theta)|d\theta = \int_0^{1/2}\log|4z_1(2\theta)z_2(2\theta)|d\theta \\
        & = \int_0^{1/2}\log|4C(2\theta)|d\theta = \dfrac{1}{2}\int_0^1\log|4C(\theta)|d\theta.
\end{align*}
So, for the case $\alpha=0$ it is enough to calculate the integral 
\[
2\log (\kappa(\tilde\gamma)) = \int_0^1\log|4C(\theta)|d\theta. 
\]
From the form in \ref{model_sta} and the conditions on $\varepsilon$ and $x_0$, we have
\begin{align*}
    2\log(\kappa(\tilde\gamma)) & = \int_0^1\log |4C(\theta)|d\theta \\ 
        & =\int_0^1\log|4(1/4-\varepsilon^2x_0)|d\theta + \int_0^1\log\left|1-\dfrac{\varepsilon^2}{1/4-\varepsilon^2x_0}e^{2\pi i\theta}\right|d\theta,
\end{align*}
so, if 
\[
0\leq x_0<1/4\ \ \ \text{and}\ \ \ \ \frac{\varepsilon^2}{\frac{1}{4}-\varepsilon^2x_0}<1
\]
then $\kappa(\gamma) < 1,$ and if $x_0=0$ $\kappa(\gamma)=1$.

Hence, the curve is attracting (indifferent for $x_0=0$) for the static fibred polynomial. But we are interested in the non-static fibred case $\alpha>0$, that is, after applying Lagrange interpolation and normalizing to the canonical form. 

A direct calculus shows that the fibred quadratic polynomial, in its canonical form obtained this way, is given by 
$$
Q(\theta,z) = \left(\theta+\alpha, z^2+\dfrac{C(\theta)}{u(\theta+\alpha)}\right),
$$
where
\[u(\theta) = [A(\theta)]^{-1}\cdot\prod_{j=0}^\infty\left[\frac{A(\theta+j\alpha)}{A(\theta+(j+1)\alpha)}\right]^{\dfrac{1}{2^{j+1}}},\]
and 
$$
A(\theta) = \dfrac{e^{\pi i\alpha}\sqrt{1+x_0e^{-2\pi i(\theta+\alpha)}}}{\sqrt{1+x_0e^{-2\pi i\theta}}},
$$
with the invariant 2-curve given by 
$$
\Gamma_0 = \dfrac{1}{u(\theta)}\cdot(z_1\ z_2),
$$
and corresponding \emph{unfolding}
$$
\tilde{\gamma}_0(\theta) = \dfrac{\tilde\gamma(\theta)}{u(\theta)}. 
$$
\begin{remark}
Note that if $\alpha>0$ is sufficiently small, the product in the above relation is very close to 1, so $u(\theta)$ is very close to $ [A(\theta)]^{-1}$.    
\end{remark}

So, the multiplier of the (unfolding) invariant 2-curve is 
\begin{align*}
    \kappa(\tilde{\gamma}_0) & = \exp\left(\int\log\left|\dfrac{2\tilde\gamma(\theta)}{u(\theta)}\right|d\theta\right) = \exp\left(\int\log|\tilde\gamma(\theta)|d\theta-\int\log\left|\dfrac{u(\theta)}{2}\right|d\theta\right).
\end{align*}
Hence, for $\tilde{\gamma}_0$ (and then $\Gamma$) to be attracting, it is enough that 
\begin{equation}\label{4.c}
\int\log\left|\dfrac{u(\theta)}{2}\right|d\theta = \int\log\left|2{A(\theta)}\right|d\theta = 0.
\end{equation}
But, $A(\theta)=\dfrac{e^{\pi i\alpha}\sqrt{1+x_0e^{-2\pi i(\theta+\alpha)}}}{\sqrt{1+xe^{-2\pi i\theta}}}$, and from Remark \ref{4.c} Equation (\ref{4.c}) reduces to 
$$
\int\log|\sqrt{1+x_0e^{-2\pi i\theta}}|d\theta = \int\log|\sqrt{1+x_0e^{-2\pi i(\theta+\alpha)}}|d\theta=0,
$$
which follows by noticing that each integral above is the real part of the integral 
$$
\int_{|z|=r}f(z)dz, 
$$
where $f(z)=\sqrt{1+z}$ and $r=x_0$ in the former and $f(z)=\sqrt{1+e^{\pi i\alpha}z}$ and $r=x_0$ in the former. We conclude that 
$$\kappa_f({\Gamma_0})=\kappa(\tilde\gamma_0)<1.$$
This way, the 2-curve $\Gamma_0 = \dfrac{1}{u(\theta)}\cdot(z_1\ z_2)$ is an \emph{attracting invariant} multi-curve for the fibred quadratic polynomial 
$$
P(\theta,z) = \left(\theta+\alpha, z^2+\mathcal{C}(\theta)\right),
$$
with $\mathcal{C}(\theta)=C(\theta)/u(\theta+\alpha)$. It is clear from the proof, that if $x_0=0$, $\kappa(\Gamma_0)=1$, and the 2-curve is indifferent. 
\end{proof}

\subsection*{Fibred combinatorics ($\tau=1$)}
In the above construction, by obtaining the invariance of the curve in the lifting, dynamically we stay over the same `part' of the $2$-curve. We recall that a $n$-curve may have defined a combinatorics ``over the fibred''. \\
For the case of the $2$-curve, there are only two possible combinatorics. 
\begin{itemize}
    \item The dynamics stay in the same part of the curve ($\tau=0$).
    \item The dynamics do ``jumps'' between the two parts ($\tau=1$).
\end{itemize}
It is clear that $\tau=0$ in the above construction. For $\tau=1$, the Lagrange interpolation polynomial (affine) $\tilde{\ell}$, may be defined by the pairs 
$$ (\tilde{\gamma}_0(\theta),\tilde{\gamma}_1(\theta)) \qquad \text{and} \qquad (\tilde{\gamma}_1(\theta + \frac{\alpha}{2}), \tilde{\gamma}_0(\theta + \frac{\alpha}{2})). $$ 

However, the idea of taking $\alpha > 0$ sufficiently small is that the Lagrangian interpolation polynomial $\tilde{l}$ is very close to the identity so that the composed fibered polynomial $P\circ \tilde{l}$ is, in fact, a small perturbation of the static dynamics.

There is another way to obtain a 2-invariant curve with ``jumping integer'' $\tau =1$. Let's consider the parameterized curve, $\varepsilon > 0$ small,

$$C(\theta)=-\frac{3}{4} -\varepsilon^2e^{2\pi i\theta}$$

That is, $C:\TT \rightarrow \CC$ is a simple closed curve around the parameter, which is a parameter with parabolic multiplicity equal to 2.

Given the static quadratic polynomial
$$P(\theta,z)=(\theta, z^2 + C(\theta)),$$

we have that the sets (curves) of periodic points of period 2 are given by:

$$z_1(\theta)=-\frac{1}{2}+\varepsilon e^{\pi i \theta} \qquad \text{and}\qquad z_2(\theta)=-\frac{1}{2}-\varepsilon e^{\pi i \theta}$$

with 

$$p_{\theta}(z_1(\theta))=z_2(\theta) \qquad \text{and}\qquad  p_{\theta}(z_2(\theta))=z_1(\theta).$$

In other words, the dynamics (in each iteration) ``jumps'' between the two curves $z_1$ and $z_2$.

Similar to the previous case, the curve $\tilde{\gamma}:\TT\rightarrow \CC$ given by 

$$\tilde{\gamma}_1(\theta)=-\frac{1}{2}+\epsilon e^{2\pi i \theta}$$
induces the $2$-curve $\Gamma=(z_1,z_2)$. Furthermore $\tilde{\gamma}_1$ is an invariant curve for the static quadratic polynomial

$$(\theta,z)\mapsto (\theta, z^2+C(\langle 2\theta \rangle)).$$

Now, for sufficiently small $\alpha > 0$ we take the (linear) Lagrange Interpolation polynomial that sends $z_1(\theta)$ to $z_1(\theta + \frac{\alpha}{2})$ and $z_2(\theta)$ to $z_2(\theta + \frac{\alpha}{2})$, we have the following analogous result for $\tau=1$.

\begin{lemma}
 For sufficiently small $\alpha >0$ and $\tau =1$ define the fibred quadratic polynomial.
\begin{displaymath}
    \begin{array}{rccl}
        \widetilde{P}: & \TT\times\CC  & \to &  \TT\times\CC \\
           & (\theta, z)  & \mapsto & (\theta + \frac{\alpha+\tau}{2},z^2+C(\langle 2\theta\rangle)),
    \end{array}
\end{displaymath}
Then the (unfolding) curve $\tilde{\gamma}_1$ is invariant for $\widetilde{P}.$
\end{lemma}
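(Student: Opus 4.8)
The plan is to mimic the verification already carried out for the $\tau=0$ case (Proposition \ref{alpha_pos}), now adapted to the period-$2$ cycle around the parabolic parameter $c_0=-3/4$. First I would recall that for the static polynomial $P(\theta,z)=(\theta,z^2+C(\theta))$ with $C(\theta)=-\tfrac34-\varepsilon^2 e^{2\pi i\theta}$, the two curves $z_1(\theta)=-\tfrac12+\varepsilon e^{\pi i\theta}$ and $z_2(\theta)=-\tfrac12-\varepsilon e^{\pi i\theta}$ satisfy $p_\theta(z_1(\theta))=z_2(\theta)$ and $p_\theta(z_2(\theta))=z_1(\theta)$; this is a direct substitution using $z_1 z_2 = \tfrac14-\varepsilon^2 e^{2\pi i\theta}$ (sum of products of roots) and $z_1+z_2=-1$, so that $z_j^2+C = -z_j - z_1 z_2 + z_1 z_2 - \tfrac34 = \dots$, i.e. a short computation showing the exchange. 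The point $e^{\pi i}=-1$ then glues $z_1$ and $z_2$ into the single loop $\tilde\gamma_1(\theta)=-\tfrac12+\varepsilon e^{2\pi i\theta}$, since $z_1(\theta)=\tilde\gamma_1(\theta/2)$ on $[0,1]$ and $z_2(\theta)=\tilde\gamma_1((\theta+1)/2)$, exactly as in the construction preceding Definition \ref{def_multi_curve}.

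Next I would check that $\tilde\gamma_1$ is invariant for the lifted static map $(\theta,z)\mapsto(\theta,z^2+C(\langle 2\theta\rangle))$: this is the computation $q_{C(\langle 2\theta\rangle)}(\tilde\gamma_1(\theta))=\tilde\gamma_1(\theta)$ when we account for the fact that applying $q_{C(\langle 2\theta\rangle)}$ implements the exchange $z_1\leftrightarrow z_2$, which at the level of the unfolding loop is precisely the half-turn $\theta\mapsto\theta+\tfrac12$ composed back — so one must verify $q_{C(\langle 2\theta\rangle)}(\tilde\gamma_1(\theta))=\tilde\gamma_1(\theta)$ holds as stated because the lift already folds the two branches together. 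Then I would introduce the affine Lagrange interpolation $\tilde\ell_\theta$ sending the pair $(z_1(\theta),z_2(\theta))$ to $(z_1(\theta+\tfrac\alpha2),z_2(\theta+\tfrac\alpha2))$, as announced just before the Lemma, and form $\widetilde P = \widetilde L_\alpha\circ\widehat Q$ with the appropriate deck shift $\tfrac{\alpha+\tau}{2}$, $\tau=1$. By construction $\tilde\ell_\theta\circ q_{C(\langle 2\theta\rangle)}$ maps $z_j(\theta)$ to $z_j(\theta+\tfrac\alpha2)$, and since the unfolding curve over $[0,\tfrac12]$ agrees with $z_1(2\,\cdot\,)$ and over $[\tfrac12,1]$ with $z_2(2\,\cdot\,)$, the invariance relation (\ref{cond_curve_inv}) for $\tilde\gamma_1$ under $\widetilde P$ follows immediately from the interpolation identities, exactly parallel to the proof of Proposition \ref{alpha_pos}. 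This is the content of the Lemma, so no multiplier computation is needed here.

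The only genuine subtlety — and the step I expect to require the most care — is bookkeeping the jumping integer $\tau=1$ correctly through the deck-transformation formalism of diagram (\ref{comm_diag}): one must be sure that the shift by $\tfrac{\alpha+1}{2}$ on the base is the right lift of the rotation by $\alpha$ so that $\Pi_2\circ\widetilde P = F\circ\Pi_2$ holds, and that the two concatenated arcs of $\tilde\gamma_1$ are matched up with the correct branches $z_1,z_2$ under this shift (equivalently, that formula (\ref{eq_jump}) gives $f_\theta(\gamma_i(\theta))=\gamma_{i+1}(\theta+\alpha)$ rather than $\gamma_i(\theta+\alpha)$). Once the indexing is pinned down, everything reduces to the two interpolation equalities above, which are elementary. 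I would therefore organize the proof as: (i) verify the period-$2$ exchange for the static model by direct substitution; (ii) identify the unfolding loop $\tilde\gamma_1$ and its invariance under the lifted static map; (iii) post-compose with $\tilde\ell_\theta$ and check that the deck shift $\tfrac{\alpha+1}{2}$ together with the interpolation identities yields $p_\theta(\tilde\gamma_1(\theta))=\tilde\gamma_1(\theta+\tfrac\alpha2)$, i.e. invariance of $\tilde\gamma_1$ for $\widetilde P$ in the sense of Definition \ref{def_inv}, which is exactly the assertion of the Lemma.
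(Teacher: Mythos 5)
Your overall strategy is the intended one: verify the period-$2$ exchange for the static model by substitution, glue $z_1,z_2$ into the single loop $\tilde\gamma_1$, post-compose with the affine Lagrange interpolation, and track the deck shift. You also correctly observe that the displayed formula for $\widetilde P$ must be read as including $\tilde\ell_\theta$ (without it the statement fails for every $\alpha>0$, since the fibre map alone cannot produce the extra $\alpha/2$ rotation of the unfolding parameter). However, the two concrete identities you propose to verify are both wrong, and they are wrong precisely at the point you yourself single out as the delicate one. In step (ii) you assert $q_{C(\langle 2\theta\rangle)}(\tilde\gamma_1(\theta))=\tilde\gamma_1(\theta)$ ``because the lift already folds the two branches together''. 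It does not: since $e^{2\pi i\langle 2\theta\rangle}=e^{4\pi i\theta}$, a direct computation gives
\begin{equation*}
\tilde\gamma_1(\theta)^2+C(\langle 2\theta\rangle)=\tfrac14-\varepsilon e^{2\pi i\theta}+\varepsilon^2e^{4\pi i\theta}-\tfrac34-\varepsilon^2e^{4\pi i\theta}=-\tfrac12-\varepsilon e^{2\pi i\theta}=\tilde\gamma_1\!\left(\theta+\tfrac12\right),
\end{equation*}
so the image lands on the \emph{opposite} arc of the unfolding. That half-turn is exactly what the jumping integer $\tau=1$ records; it is absorbed by the deck shift $\tau/2=1/2$ in the base of the lift, not by any folding in the fibre.

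Consequently your step (iii) target $p_\theta(\tilde\gamma_1(\theta))=\tilde\gamma_1(\theta+\tfrac{\alpha}{2})$ is the $\tau=0$ relation and cannot hold here: invariance of $\tilde\gamma_1$ for $\widetilde P$ in the sense of Definition \ref{def_inv} means
\begin{equation*}
\tilde\ell_\theta\bigl(q_{C(\langle 2\theta\rangle)}(\tilde\gamma_1(\theta))\bigr)=\tilde\gamma_1\!\left(\theta+\tfrac{\alpha+1}{2}\right),
\end{equation*}
matching the base shift $\frac{\alpha+\tau}{2}$ with $\tau=1$. If you carried out the verification with the identities as you wrote them, the computation would simply fail, so this is a genuine error rather than bookkeeping. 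The repair is short: $q_{C(\langle 2\theta\rangle)}$ sends $\tilde\gamma_1(\theta)$ to $\tilde\gamma_1(\theta+\tfrac12)$ as computed above, and the interpolation $\tilde\ell_\theta$ (which advances both points of the fibre of $\Gamma$ by $\alpha/2$ in the unfolding parameter) then sends $\tilde\gamma_1(\theta+\tfrac12)$ to $\tilde\gamma_1(\theta+\tfrac12+\tfrac{\alpha}{2})=\tilde\gamma_1(\theta+\tfrac{\alpha+1}{2})$, which is exactly the required image. With that correction, and with the indexing of $\tilde\ell$ taken consistently in the lifted coordinate, the rest of your outline goes through.
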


\begin{proposition}
    The $2$-curve $\Gamma = (z_1\ z_2)$ is an invariant multi-curve of \emph{fibred quadratic polynomial} 
    \begin{equation}
        F(\theta, z) = (\theta + \alpha + \tau, \Tilde{\ell}_{\theta/2}(z^2+C(\langle 2\theta\rangle)),
    \end{equation}
    where $\tau=1$ is the jumping integer for the \emph{fqp}.  
\end{proposition}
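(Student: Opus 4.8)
The plan is to deduce the statement from Definition \ref{def_mult-curve} together with the commutative diagram (\ref{comm_diag}), exactly along the lines of Proposition \ref{alpha_pos}, the only difference being that here the interchange of the two branches is carried by the static dynamics itself rather than by the Lagrange correction. By Definition \ref{def_mult-curve}, the $2$-curve $\Gamma=(z_1\ z_2)$ is a dynamically invariant multi-curve for $F$ with jumping integer $\tau$ exactly when its unfolding curve $\tilde\gamma_1(\theta)=-\tfrac12+\varepsilon e^{2\pi i\theta}$ is an invariant curve, in the sense of Definition \ref{def_inv}, for the lift $\widetilde F_\tau$ of $F$ under the double cover $\Pi_2$. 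Thus the statement reduces to two things: (i) producing the correct lift and checking that $\tilde\gamma_1$ satisfies the invariance equation $\widehat f_\theta(\tilde\gamma_1(\theta))=\tilde\gamma_1\!\big(\theta+\tfrac{\alpha+\tau}{2}\big)$ for it, and (ii) reading off that $\tau=1$.

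For (i) I would factor the candidate lift as $\widetilde F_\tau=\widetilde L_\alpha\circ\widehat P$, where $\widehat P(\theta,z)=\big(\theta+\tfrac12,\ z^2+C(\langle 2\theta\rangle)\big)$ is the lift of the static quadratic $(\theta,z)\mapsto(\theta,z^2+C(\theta))$ with the deck translation $\theta\mapsto\theta+\tfrac12$ built in, and $\widetilde L_\alpha$ is the fibred affine map given by the linear Lagrange interpolation polynomials of Section \ref{sec_multi_inv}, advancing the base argument by $\tfrac\alpha2$. The verification then splits into two moves. The quadratic move: since $z_1,z_2$ are genuine period-$2$ points of $q_{C(\theta)}$, with $q_{C(\theta)}(z_1(\theta))=z_2(\theta)$ and $q_{C(\theta)}(z_2(\theta))=z_1(\theta)$, a direct substitution gives
$$\big(-\tfrac12+\varepsilon e^{2\pi i\theta}\big)^2+\big(-\tfrac34-\varepsilon^2 e^{4\pi i\theta}\big)=-\tfrac12-\varepsilon e^{2\pi i\theta}=\tilde\gamma_1\!\big(\theta+\tfrac12\big),$$
so $\widehat P$ carries $\tilde\gamma_1(\theta)$ to $\tilde\gamma_1(\theta+\tfrac12)$; this already settles the static case $\alpha=0$. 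The affine move: by its defining interpolation conditions $\widetilde L_\alpha$ carries $\tilde\gamma_1(\mu)$ to $\tilde\gamma_1(\mu+\tfrac\alpha2)$, so applied at $\mu=\theta+\tfrac12$ the composition lands on $\tilde\gamma_1\!\big(\theta+\tfrac{\alpha+1}{2}\big)$, which is precisely the invariance equation for $\widetilde F_1$. One also checks, as in the previous constructions, that $\tilde\ell_\theta$ is affine with leading coefficient close to $1$ for $\alpha$ small and depends continuously on $\theta$ (since $z_1(\theta)\neq z_2(\theta)$ whenever $\varepsilon>0$), so that $F$ is indeed a fibred quadratic polynomial.

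For (ii) I would use Equation (\ref{eq_jump}): the jumping integer is read off from $f_\theta(\gamma_i(\theta))=\gamma_{i+\tau}(\theta+\alpha)$. Since the static quadratic part already interchanges the two components $z_1$ and $z_2$, while the affine Lagrange correction is a small perturbation of the identity and hence preserves their labelling, the composed fibred map sends $z_1$ to $z_2$ and $z_2$ to $z_1$; hence $\tau\equiv 1 \pmod 2$, i.e.\ $\tau=1$. This also confirms a posteriori that the deck translation $\theta\mapsto\theta+\tfrac12$ chosen in (i) is the correct branch of the covering.

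The step I expect to be the main obstacle is the bookkeeping of translations in the affine move of (i): one must be sure that the Lagrange interpolation (whose interpolation data involve both unfolding curves and, read upstairs, a base shift of $\tfrac\alpha2$) composed with the branch-swap of the static quadratic yields exactly the shift $\theta\mapsto\theta+\tfrac{\alpha+1}{2}$ of $\tilde\gamma_1$ --- and not $\theta+\tfrac\alpha2$ nor $\theta+\tfrac{\alpha+1}{2}+\tfrac12$ --- keeping careful track of the factor $2$ relating the base variable of $F$ to that of its $\Pi_2$-lift. The hypothesis that $\alpha>0$ be sufficiently small enters exactly here: it guarantees that $\widetilde L_\alpha$ is close enough to the identity that it neither destroys the quadratic character of $F$ nor merges the two components of $\Gamma$.
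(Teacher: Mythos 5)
Your proposal is correct and follows essentially the same route the paper intends: the paper states this proposition without proof as ``analogous'' to the $\tau=0$ construction, and your argument --- lifting under $\Pi_2$, checking by direct substitution that the static quadratic sends $\tilde\gamma_1(\theta)$ to $\tilde\gamma_1(\theta+\tfrac12)$ (the branch swap, hence $\tau=1$), and then composing with the affine Lagrange correction to accumulate the total base shift $\tfrac{\alpha+1}{2}$ --- is exactly the omitted verification. If anything, your bookkeeping is more careful than the paper's, whose preceding Lemma states the invariance for $\widetilde P(\theta,z)=(\theta+\tfrac{\alpha+\tau}{2},z^2+C(\langle 2\theta\rangle))$ without the Lagrange factor, which as you note only holds once $\widetilde L_\alpha$ is composed in.
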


\begin{remark}
    Conditions (\ref{4.a}), also imply that the parametric curve of the perturbed fibred quadratic polynomial $P(\theta,z)$ has topological degree one with respect to $c_0=1/4$. 
\end{remark}

\section{Searching for 3-curves}
There are two well-known parametrizations for quadratic dynamics. 
\[
z\mapsto P_c(z)=z^2 + c,\ c\in\CC \qquad\text{and}
\qquad z\mapsto Q_\lambda (z) = \lambda z + z^2, \ \lambda\in\CC.
\]
The former generates the picture of the famous Mandelbrot set, defined as 
\[
\mathcal{M}_c = \{c\in\CC : \{P_c^n(c)\}_{n\in\NN} \text{ is bounded}\}
\]
\begin{figure}[ht]
\includegraphics[width=6cm]{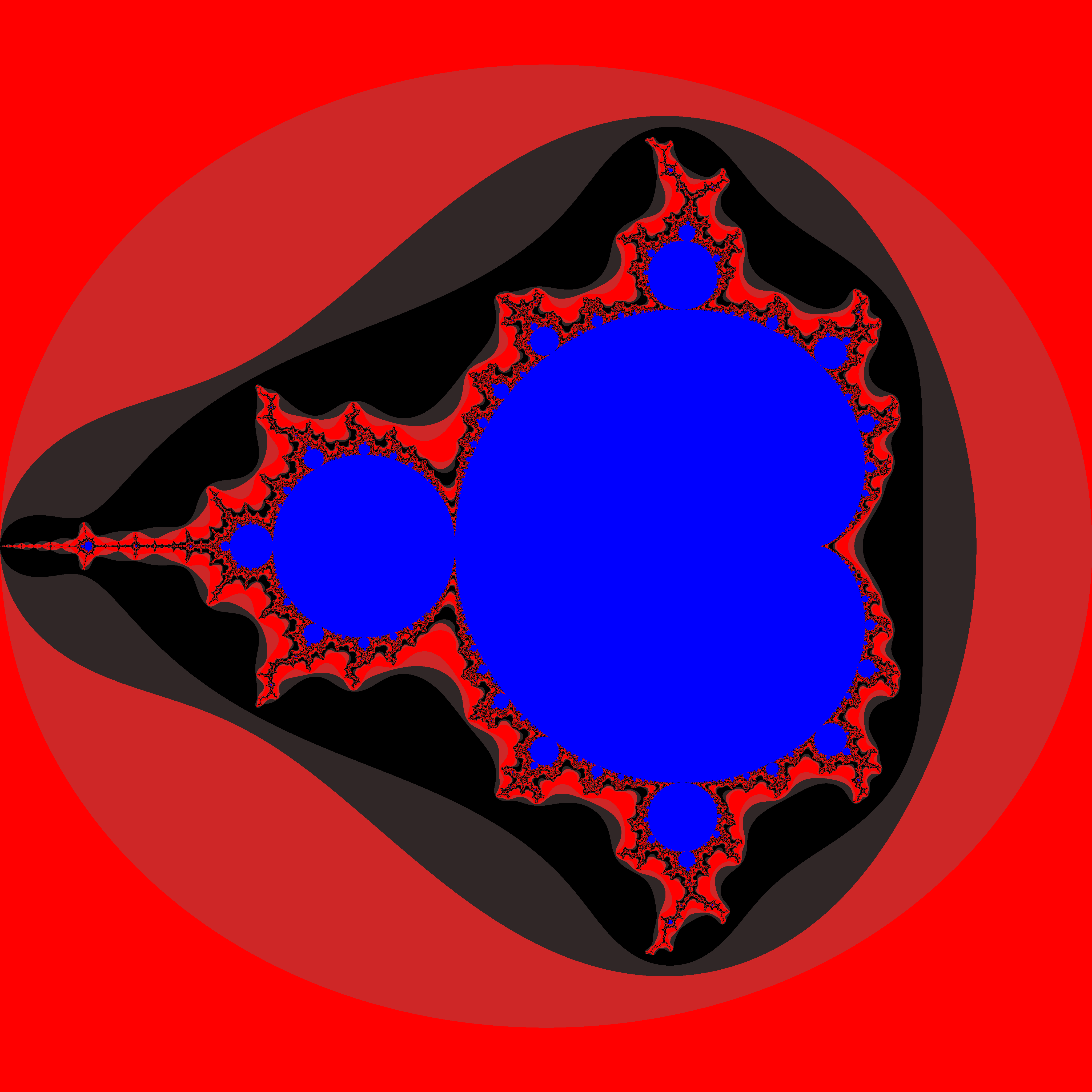}
\centering
\caption{Mandelbrot Set}
\end{figure}
This parametrization is based on the behavior of the only critical value $z_c=c$. On the other hand, the further parametrization is based on the dynamical nature of the two fixed points of the system, In particular, $\lambda\in\DD$ corresponds to a quadratic dynamics with an attracting fixed point (for $\lambda = 0$, $z=0$ is a super-attracting fixed point). The parameter space can be defined as 
\[
\Lambda_\lambda = \{\lambda\in\CC : \{Q_\lambda^n(-\lambda/2)\}_{n\in\NN}\} \text{ is bounded}\}.
\]
\begin{figure}[ht]
\includegraphics[width=8cm]{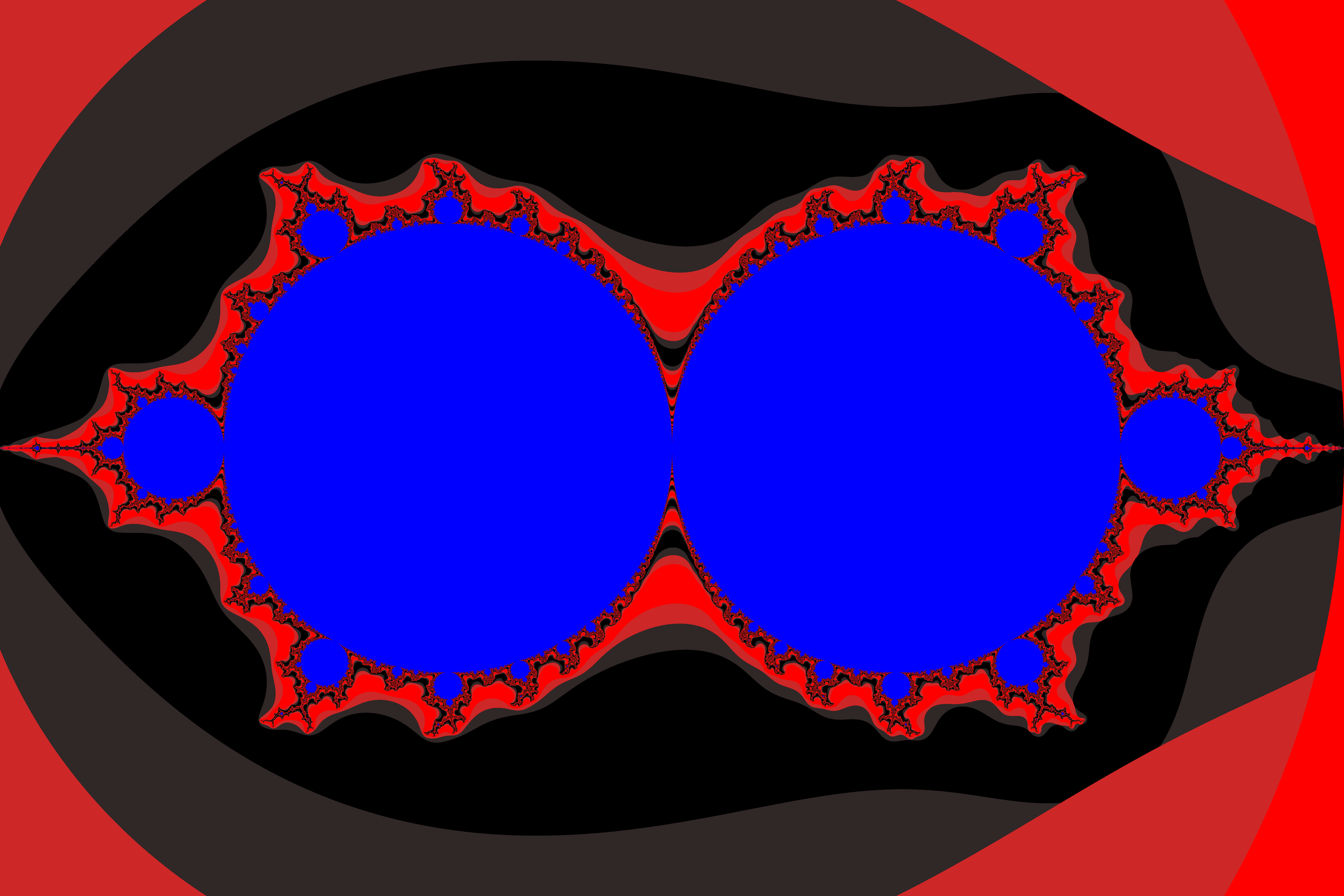}
\centering
\caption{The Lambda space $\Lambda$}
\end{figure}

From the definition, it follows that 
\[
\mathcal{P}ar := \{\lambda = e^{2\pi i\theta} : \theta\in \QQ\},
\]
is the set of parameters with a parabolic fixed point. We have a natural correspondence (2 to 1) between the sets $\Lambda_\lambda$ and $\mathcal{M}_c$, given by the conjugation by $T_{\lambda/2}(z) = z+\lambda/2$ ($z_\lambda=-\lambda/2$ corresponds to the critical point of $Q_\lambda$). The corresponding quadratic function is 
\[
P_\lambda(z) = z^2 + \frac{\lambda}{2}\left(1 - \frac{\lambda}{2}\right). 
\]
Note that $\lambda = 1$ corresponds to the critical value $c=1/4$ and $\lambda=-1$ to $c=-3/4$ as expected. It follows that the map 
\[
\lambda\mapsto\frac{\lambda}{2}\left(1 - \frac{\lambda}{2}\right),
\]
is a correspondence (2-1) between the Lambda space and the Mandelbrot set. 

One of the parabolic fixed points with $3$-petals, in the Lambda space, is given by the parameter $\lambda_0 = e^{\frac{2\pi i}{3}}$, then the quadratic polynomial 
\[
p_{\lambda_0}(z) = z^2 + \lambda_0
\]
\begin{figure}[ht]
\includegraphics[width=6cm]{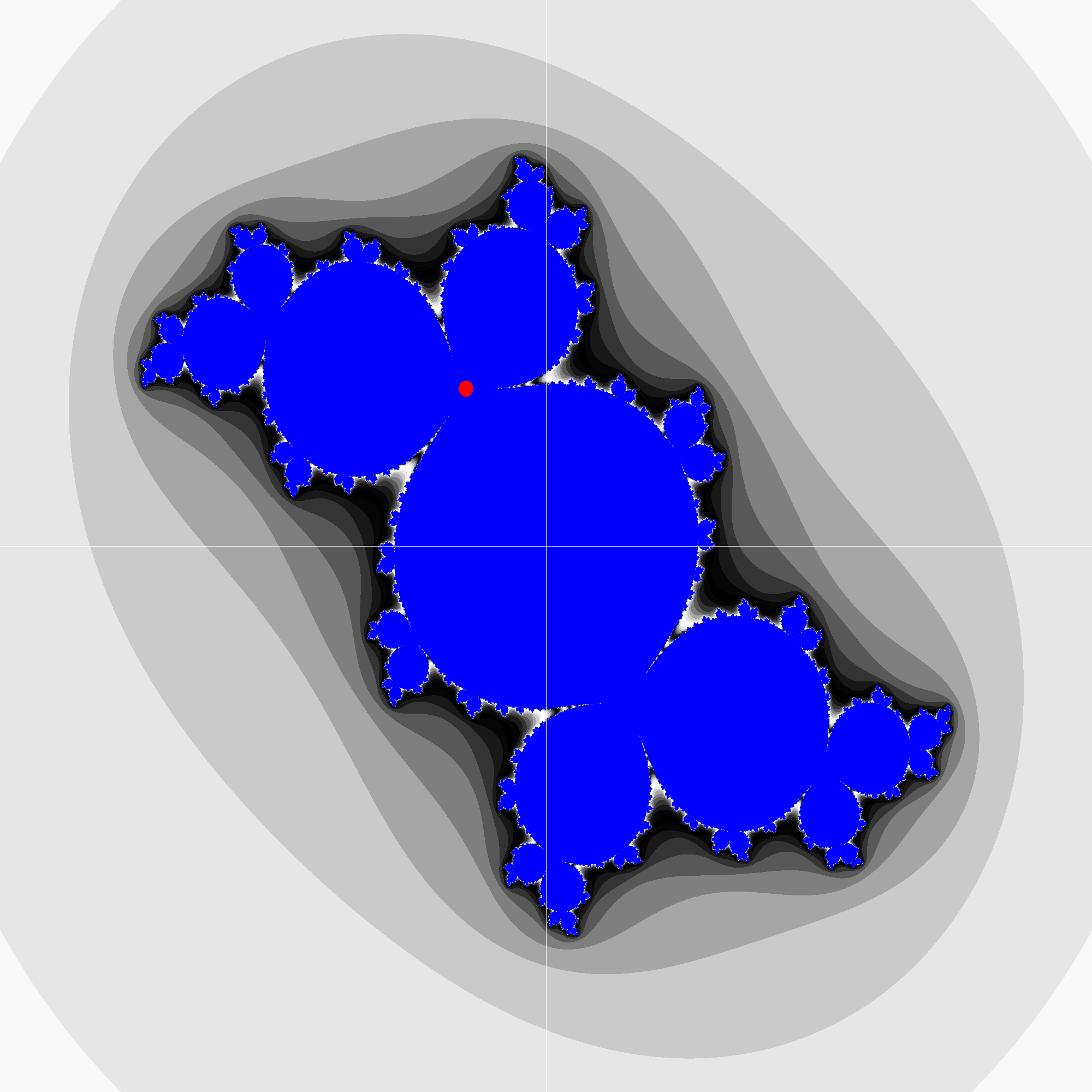}
\centering
\caption{Filled-in Julia set for $p_{\lambda_0}$. The \textbf{red} point corresponds to the parabolic fixed point. It can be appreciate the 3-petals around it. }
\end{figure}
has a parabolic fixed point with 3-petals. So, $\lambda_0$ is a center candidate for the fibred (family of) quadratic polynomial with $\alpha=0$. 

For $\alpha=0$, consider the (static) fibred quadratic polynomial 
\begin{displaymath}
    \begin{array}{rccl}
        P: & \TT\times\CC  & \to &  \TT\times\CC \\
           & (\theta, z)  & \mapsto & (z, p_{\lambda_0}(z) + \varepsilon^2 e^{2\pi i\theta}),
    \end{array}
\end{displaymath}
with $\varepsilon>0$ sufficiently small ($\varepsilon\sim 1/100$). Here are some images of the \emph{filled Julia set,} corresponding to some of the $\theta$'s values. 


\begin{conjecture}\label{conj_1}
By ``tracking'' the corresponding 3-periodic points generated by the perturbation of the parabolic fixed point at $\lambda_0$, we have an invariant 3-curve. 
\end{conjecture}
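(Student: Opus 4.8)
The plan is to transplant, almost line for line, the mechanism of Section~\ref{sec_multi_inv}, replacing the simple parabolic parameter $1/4$ by the parameter $c_0=\tfrac{\lambda_0}{2}(1-\tfrac{\lambda_0}{2})$ with $\lambda_0=e^{2\pi i/3}$ (so that $q_{c_0}(z)=z^2+c_0$ carries a parabolic fixed point with three petals), and replacing the fixed points of $q_c$ by the period-$3$ orbit that collapses onto that parabolic point. Concretely, the steps are: (i) track the collapsing $3$-cycle of $q_{c(\theta)}$ with $c(\theta)=c_0+\varepsilon^2 e^{2\pi i\theta}$ and show that as $\theta$ runs once around $\TT$ the three points undergo a cyclic permutation, so their concatenation is a single embedded $3$-curve $\tilde\gamma:\TT\to\CC$ lying off the two fixed-point curves; (ii) observe that, for $\alpha=0$, $\tilde\gamma$ is invariant for a lift of the static $P(\theta,z)=(\theta,q_{c(\langle 3\theta\rangle)}(z))$ under the $3$-fold cover $\Pi_3$, so that $\Gamma=(w_1\ w_2\ w_3)$ is a dynamically invariant $3$-curve in the sense of Definition~\ref{def_mult-curve}; (iii) for small irrational $\alpha>0$, post-compose with a fibred interpolation map $\tilde m_\theta$ that intertwines the static cyclic dynamics with an additional $\alpha/3$-rotation of the $\theta$-parameter, exactly as in Example~\ref{ex_lag} and Proposition~\ref{alpha_pos}, so that $\tilde\gamma$ stays invariant for the perturbed $\widetilde P=\widetilde M_\alpha\circ\widehat P$. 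Taking $\tilde m_\theta$ to be the fibred Möbius map carrying $(w_1(\theta),w_2(\theta),w_3(\theta))$ to the corresponding shifted triple (three distinct points determine a unique Möbius map, which is $C^0$-close to the identity for small $\alpha$) keeps the fibre maps inside the degree-$2$ rational maps; projecting by $\Pi_3$ then yields the announced invariant $3$-curve for a fibred rational dynamics over $\rota$. A Lagrange (quadratic) interpolation works equally well and lands instead in fibred quartic polynomials.

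The crux — and the reason the statement is only a conjecture — is step (i). At $c=c_0$ the third iterate $q_{c_0}^{\circ 3}$ fixes the parabolic point $z_*$ with $(q_{c_0}^{\circ 3})'(z_*)=1$, and in the non-degenerate three-petal situation $q_{c_0}^{\circ 3}(z)-z$ vanishes to order $4$ at $z_*$; since $z_*$ is a simple root of the fixed-point factor $z^2-z+c_0$, it is therefore a triple root of the remaining degree-$6$ factor, i.e.\ one full $3$-cycle of $q_{c_0}$ has collapsed onto $z_*$. Writing $\tilde z=z-z_*(c)$ with $z_*(c)$ the persisting fixed point, the three collapsing points are, for $c$ near $c_0$, the roots of a monic cubic $\tilde z^3+b_2(c)\tilde z^2+b_1(c)\tilde z+b_0(c)$ with $b_j(c_0)=0$; under the genericity condition $b_0'(c_0)\neq 0$ — which I would verify by a direct Taylor/residue computation at $c_0$ — a Puiseux expansion gives $w_j(\theta)=z_*(c(\theta))+\zeta^{\,j}\bigl(-b_0'(c_0)\,\varepsilon^2 e^{2\pi i\theta}\bigr)^{1/3}\bigl(1+o(1)\bigr)$ with $\zeta=e^{2\pi i/3}$. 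Hence as $\theta$ increases by $1$ the three points are cyclically permuted, their concatenation closes up into a single $3$-curve, and for $\varepsilon$ small it is a genuine embedding disjoint from the two fixed-point curves. Making this local analysis fully rigorous — controlling the error terms uniformly in $\theta$, isolating the correct cubic factor from that of the \emph{other} period-$3$ cycle of $q_{c_0}$, and excluding higher-order degeneracies at this specific $c_0$ — is the one substantive difficulty; everything else is dictated by the machinery of Section~\ref{multi_curves}.

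Granting (i), step (ii) is immediate: with $\alpha=0$ the fibre map $q_{c(\theta)}$ permutes $\{w_1(\theta),w_2(\theta),w_3(\theta)\}$ cyclically, so $P\big|_\Gamma$ is a homeomorphism, and the lift $\widehat P_\tau(\theta,z)=(\theta+\tau/3,\,q_{c(\langle 3\theta\rangle)}(z))$ satisfies $q_{c(\langle 3\theta\rangle)}(\tilde\gamma(\theta))=\tilde\gamma(\theta+\tau/3)$ for the value $\tau\in\{1,2\}$ forced by the $3$-cycle, so the jumping integer is nonzero. Step (iii) is then a verbatim repetition of the proof of Proposition~\ref{alpha_pos}: $\tilde m_\theta$ is built so that $\widetilde P$ carries $\tilde\gamma(\theta)$ to $\tilde\gamma(\theta+\tfrac{\alpha+\tau}{3})$, whence $\tilde\gamma$ is an invariant curve of $\widetilde P$ and, by Definition~\ref{def_mult-curve}, $\Gamma$ is an invariant $3$-curve of the downstairs fibred dynamics. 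If one further wishes $\Gamma$ to be \emph{attracting} — so that the multi-tube of Lemma~\ref{multi_tub} exists — one computes the fibred multiplier $\kappa_f(\Gamma)$ from \eqref{multi-multiplier}; as in Section~\ref{sec_multi_inv} this reduces, via Vieta applied to the cubic factor whose roots are $w_1,w_2,w_3$, to an explicit integral of $\log$ of a function of $c(\theta)$ over the loop plus a contribution from the normalizing cocycle, and one tunes the loop (the analogues of $x_0,\varepsilon$ in \eqref{4.a}) to make it negative. I expect this last computation to be routine but heavier than the $n=2$ case; it is in any event not needed for Conjecture~\ref{conj_1}, which only asserts existence of the invariant $3$-curve.
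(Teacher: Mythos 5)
First, be aware that the statement you are addressing is left as a \emph{conjecture} in the paper: no proof is given, and the author explicitly lists the obstructions (no closed formula for the period-$3$ points; the Lagrange interpolation becomes quadratic, pushing the degree to $4$) before sidestepping them in the final subsection with precisely the M\"obius/rational construction you propose in step (iii). Your steps (ii) and (iii) therefore reproduce the paper's own machinery (the $\Pi_3$-lift, Definition \ref{def_mult-curve}, and the interpolation post-composition of Section \ref{sec_multi_inv}), and they are sound \emph{conditional on} step (i); you also correctly replace the paper's $z^2+\lambda_0$ by $z^2+c_0$ with $c_0=\frac{\lambda_0}{2}\left(1-\frac{\lambda_0}{2}\right)$, which is the parameter that actually carries the three-petal parabolic fixed point in the $c$-parametrization.

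The genuine gap is step (i), and you have flagged it yourself: the entire content of Conjecture \ref{conj_1} is that the three tracked periodic points close up into a single $3$-curve rather than into three disjoint invariant $1$-curves, i.e.\ that the monodromy of the collapsing $3$-cycle as $c(\theta)$ winds once around $c_0$ is a cyclic permutation of order $3$. (In the static case $\alpha=0$, invariance of the cycle is automatic; the connectedness/winding is the whole theorem.) Your Puiseux argument correctly reduces this to the nondegeneracy $b_0'(c_0)\neq 0$ of the constant term of the Weierstrass cubic factor of $\frac{q_c^{\circ 3}(z)-z}{q_c(z)-z}$ at $(z_*,c_0)$, but this is asserted, not verified; you would also need to record that the multiplicity of $z_*$ as a fixed point of $q_{c_0}^{\circ 3}$ is exactly $4$ (which does follow cheaply here, since a quadratic polynomial has a single critical point and hence a single cycle of petals) and that the \emph{other} period-$3$ cycle of $q_{c_0}$ is uniformly separated from $z_*$, so the cubic factor is well isolated for small $\varepsilon$. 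Until $b_0'(c_0)\neq 0$ is actually checked --- e.g.\ by showing $c_0$ is a simple zero of the discriminant of the period-$3$ factor, or by invoking and proving the standard nondegeneracy of the satellite bifurcation at the root of the $1/3$-limb --- what you have is a correct and well-aimed strategy, not a proof; this is consistent with the paper's decision to leave the statement as a conjecture. Everything downstream of (i), including the optional attractivity computation via Vieta and the normalizing cocycle, is routine and parallels Sections \ref{multi_curves}--\ref{sec_multi_inv}.
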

\subsection{The rational quadratic 3-curve} 
Conjecture \ref{conj_1} in the previous section has two basic obstacles: there is no closed formula to find the 3-period points of the quadratic polynomial, and if we could know the 3-cycle, the Lagrange Interpolation polynomial is no longer \emph{linear}, now is quadratic, so the polynomial obtained with it is now of degree 4. 


Nevertheless, it is possible to maintain the degree when we construct the 3-curve, but there is a price to pay. The fibred dynamics is now {\em rational}. It is well known that the Möbius transformations are 3-{\em transitive}: that is, given two set of points $(z_1,z_2,z_3)$ and $(w_1,w_2.w_3)$, there exists a Möbius transformation mapping $z_i$ to $w_i$. 

Let $\gamma:\TT\to\CC^*$ be a simple (and small) loop around the parabolic parameter $\lambda_0$ mentioned before. Consider the ``fibred'' quadratic polynomial given by 
\begin{displaymath}
    \begin{array}{rccl}
        P: & \TT\times\CC  & \to &  \TT\times\CC \\
           & (\theta, z)  & \mapsto & (z, z^2+\gamma(\theta)).
    \end{array}
\end{displaymath}
The polynomial $p_\theta(z)=z^2+\gamma(\theta)$ may be considered as a loop-perturbation of the polynomial $z\mapsto z^2+\lambda_0$ (\emph{parabolic implosion}). In this sense, for every $\theta\in\TT$, $p_\theta$ has a 3-cycle $(\gamma_0(\theta),\gamma_1(\theta),\gamma_2(\theta))$. By the continuity of $\gamma$, this 3-cycle moves continuously on $\TT\times\CC$. 

Now, for $\alpha>0$ sufficiently small, consider the two 3-tuples $(\gamma_0(\theta),\gamma_1(\theta),\gamma_2(\theta))$ and $(\gamma_0(\theta+\alpha),\gamma_1(\theta+\alpha),\gamma_2(\theta+\alpha))$, for each $\theta\in\TT$. Now, for each $\theta\in\TT$, let $M:\widehat{\CC}\to\widehat{\CC}$ be the Möbius transformation that maps $(\gamma_0(\theta),\gamma_1(\theta),\gamma_2(\theta))$ into $(\gamma_0(\theta+\alpha),\gamma_1(\theta+\alpha),\gamma_2(\theta+\alpha))$ (here, $\widehat{\CC}$ denotes the Riemann sphere). The following result follows from a direct calculation. 
\begin{proposition}
    Given the fibred rational quadratic dynamics, 
    \begin{displaymath}
    \begin{array}{rccl}
        Q: & \TT\times\CC  & \to &  \TT\times\CC \\
           & (\theta, z)  & \mapsto & (z+\alpha, M(z^2+\gamma(\theta))),
    \end{array}
\end{displaymath}
for a suitable loop $\gamma$ with index number 1 with respect to the parabolic parameter $\lambda_0$, then the 3-cycle $(\gamma_0(\theta),\gamma_1(\theta),\gamma_2(\theta))$ is an invariant 3-curve for $Q$. 
\end{proposition}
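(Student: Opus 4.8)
The plan is to mimic, essentially verbatim, the structure of the $2$-curve construction of the previous section, replacing the affine Lagrange interpolation with a fibred Möbius transformation. First I would make precise the sense in which the $3$-cycle $(\gamma_0,\gamma_1,\gamma_2)$ is a $3$-curve: by parabolic implosion, a small loop $\gamma$ around $\lambda_0=e^{2\pi i/3}$ produces, for each $\theta$, three attracting periodic points of period $3$ (born from the parabolic fixed point when the loop enters the appropriate component), and — crucially — one full turn of $\theta$ around the loop cyclically permutes these three points, because the three petals at $\lambda_0$ are cyclically permuted by the parabolic germ and the monodromy of the implosion realizes the local rotation number $1/3$. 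This is exactly the ``index number $1$'' hypothesis on $\gamma$, and it is what forces the concatenation $\tilde\gamma_0\ \tilde\gamma_1\ \tilde\gamma_2$ to close up into a single simple closed curve $\tilde\gamma:\TT\to\CC$ that is a lift of a genuine $3$-curve $\Gamma$ under $\Pi_3$. I would record this as the first step, citing the standard parabolic implosion / Douady–Lavaurs picture.

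Next I would set up the lifting. Under $\Pi_3(\theta,z)=(\langle 3\theta\rangle,z)$, the static polynomial $P(\theta,z)=(\theta,z^2+\gamma(\theta))$ lifts to $\widehat P(\theta,z)=(\theta, z^2+\gamma(\langle 3\theta\rangle))$, and since $p_\theta$ cyclically permutes the triple while one base-turn also cyclically permutes it, the unfolding curve $\tilde\gamma(\theta)$ (defined piecewise by $z_i(3\theta)$ on the three subintervals, matched by (\ref{eq_unf1})) satisfies $\widehat P(\theta,\tilde\gamma(\theta))=(\theta,\tilde\gamma(\theta+1/3))$ — i.e. it is invariant for the lift with deck-shift $\tau$ determined by the implosion monodromy; this is the $\alpha=0$ analogue of the static Proposition above. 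Then for $\alpha>0$ I introduce the fibred Möbius map $\tilde M_\alpha(\theta,z)=(\theta, \tilde M_\theta(z))$, where $\tilde M_\theta$ is the unique Möbius transformation carrying $(\tilde\gamma_0(\theta),\tilde\gamma_1(\theta),\tilde\gamma_2(\theta))$ to $(\tilde\gamma_0(\theta+\alpha/3),\tilde\gamma_1(\theta+\alpha/3),\tilde\gamma_2(\theta+\alpha/3))$; $3$-transitivity of $\mathrm{M\ddot ob}(\widehat\CC)$ gives existence and uniqueness, and continuity of $\gamma$ gives continuous dependence of $\tilde M_\theta$ on $\theta$ (the three target points stay distinct for small $\alpha$, so no degeneration of the cross-ratio occurs). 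Composing, $\widetilde Q=\widetilde M_\alpha\circ\widehat P$ acts as $(\theta,z)\mapsto(\theta+\alpha/3, \tilde M_\theta(z^2+\gamma(\langle 3\theta\rangle)))$, and by construction $\widetilde Q(\theta,\tilde\gamma(\theta))=(\theta+\alpha/3,\tilde\gamma(\theta+\alpha/3))$; pushing down through the commutative diagram (\ref{comm_diag}) shows $\Gamma$ is dynamically invariant for $Q(\theta,z)=(\theta+\alpha, M(z^2+\gamma(\theta)))$ with jumping integer inherited from the static monodromy. This is just Proposition \ref{alpha_pos} with $n=3$ and ``affine'' upgraded to ``Möbius''.

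Finally I would verify that $Q\big|_\Gamma$ is a homeomorphism (so that ``invariant as a subset'' upgrades to ``dynamically invariant'' in the sense of Definition \ref{def_mult-curve}): on the invariant $3$-curve, $z\mapsto z^2$ is injective provided $0\notin\{\gamma_0(\theta),\gamma_1(\theta),\gamma_2(\theta)\}$ — which holds for a loop $\gamma$ around $\lambda_0\ne0$ of small radius, since the $3$-cycle stays near the three (nonzero) petal points — and $\tilde M_\theta$ is a homeomorphism of $\widehat\CC$, so the composition restricted to the fiber of $\Gamma$ is a bijection onto the next fiber, depending continuously on $\theta$; invertibility of the inverse is the same argument run backwards. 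The main obstacle is the first step: making rigorous that a loop of index $1$ around $\lambda_0$ genuinely produces a continuously-varying $3$-cycle whose monodromy over one base-period is a single $3$-cycle (rather than, say, three fixed points or a transposition) — this is a parabolic-implosion statement and, while standard, is the only place where something beyond bookkeeping is needed; everything after it is the $n=3$ transcription of the quadratic case, with $3$-transitivity of Möbius transformations doing the work that affine interpolation did for $n=2$.
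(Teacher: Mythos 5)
Your proposal is correct and follows exactly the construction the paper intends: lift under $\Pi_3$, use the static invariance of the perturbed $3$-cycle, and post-compose with the fibred M\"obius interpolation given by $3$-transitivity --- the paper itself offers no proof beyond the remark that the result ``follows from a direct calculation.'' You also correctly isolate the one genuinely non-trivial ingredient, namely that the monodromy of the period-$3$ cycle around an index-one loop at the parabolic parameter is a single $3$-cycle (so the three point-curves close up into one $3$-curve rather than three $1$-curves), which the paper likewise leaves implicit; the only minor imprecision is your parenthetical that the cycle is \emph{attracting} for each $\theta$ (it is repelling on the part of the loop outside the period-$3$ component), but this is harmless since only invariance is claimed.
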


\subsection*{Igsyl Dom\'inguez}
   \noindent Universidad Católica Silva Henríquez, Escuela de Ciencias y Tecnología Educativa,\\
    General Jofré 462, Santiago, Chile. \\
   idominguezc@ucsh.cl.

\end{document}